\documentclass[mainlanguage=english,fncychap=none]{amsart}

\usepackage[utf8]{inputenc}
\usepackage[english]{babel}
\usepackage[T1]{fontenc}
\usepackage{amsfonts}
\usepackage{amsmath}
\usepackage{amssymb}
\usepackage{amsthm}
\usepackage{mathtools}
\usepackage{relsize}

\usepackage{tikz-cd}
\usepackage[a4paper, total={6.6in, 8.6in}]{geometry}
\usepackage{graphicx}
\usepackage[hypcap=false]{caption} 
\usepackage{subcaption}
\usepackage{cite}
\usepackage[hidelinks]{hyperref}
\usepackage{cleveref}\usepackage{mathtools}

\newtheorem{thm}{Theorem}[section]
\newtheorem{innercustomthm}{Theorem}
\newenvironment{customthm}[1]
  {\renewcommand\theinnercustomthm{#1}\innercustomthm}
  {\endinnercustomthm}
\newtheorem{prop}[thm]{Proposition}
\newtheorem{corollary}[thm]{Corollary}
\newtheorem{lemma}[thm]{Lemma}
\newtheorem{defn}[thm]{Definition}
\newtheorem{rqe}[thm]{Remark}

\theoremstyle{definition}
\newtheorem*{ack}{Acknowledgments}

\newcommand{\Z}{\mathbb{Z}}
\newcommand{\Sr}{\mathbb{S}}

\newcommand{\M}{\mathcal{M}}
\newcommand{\I}{\mathcal{I}}
\newcommand{\K}{\mathcal{K}}

\title{Triple-cup product forms of $3$-manifolds and Heegaard diagrams}
\author{Maya KAYALI}
\address{Université Bourgogne Europe, CNRS, IMB (UMR 5584), 21000 Dijon, France}
\email{maya.kayali@ube.fr}
\date{}

\begin{document}

\maketitle

\begin{abstract}
The triple-cup product form $\mu$ is a classical invariant of $3$-manifolds, determining the cohomology ring up to torsion. Given a closed, connected, oriented $3$-manifold $M$, we describe an explicit formula for computing $\mu$ from a Heegaard diagram of $M$. Then, we show that the triple-cup product form $\mu$ can be recovered as a reduction of Turaev's homotopy intersection form $\eta$ of the Heegaard surface. 
\end{abstract}

\tableofcontents

\section{Introduction}

Let $M$ be a closed, connected, oriented $3$-manifold. A Heegaard splitting of $M$ is a decomposition of $M$ into two handlebodies $H_g$ glued via a diffeomorphism $f:\partial H_g \to \partial (-H_g)$ along their boundaries 
\[
M = H_g \cup_f (-H_g).
\]
Since the diffeomorphism type of $M$ only depends on the gluing map $f$ up to isotopy, it is sufficient to know the images of the meridians of the two handlebodies in order to describe $M$. A Heegaard splitting can thus be represented by a diagram. We label the two handlebodies $H_\alpha$ and $H_\beta$, such that $M = H_\alpha \cup_f (-H_\beta)$, and we denote $\Sigma := \partial H_\alpha \cap \partial H_\beta$ the Heegaard surface in $M$. Then, a Heegaard diagram is given by a triplet $(\Sigma, \alpha, \beta)$, where $\alpha$ and $\beta$ are two families of $g$ simple closed curves, denoted $\alpha_1,\ldots,\alpha_g$ and $\beta_1,\ldots,\beta_g$, corresponding to the images of meridians of $H_\alpha$ and $H_\beta$, respectively, in $\Sigma$ (see for example \cite{Saveliev}).

Let $(\Sigma,\alpha,\beta)$ be a Heegaard diagram of $M$. For $\varepsilon\in\{\alpha,\beta\}$, the inclusion of the Heegaard surface $\Sigma$ into $H_\varepsilon$ induces a map in homology (with ordinary coefficients in $\Z$):
\[
\iota_\varepsilon : H_1(\Sigma_g) \to H_1(H_\varepsilon).
\]
We set
\[
L_\varepsilon := \ker\bigl(H_1(\Sigma)\xrightarrow{\iota_\varepsilon} H_1(H_\varepsilon)\bigr).
\]
The group $H_1(\Sigma)$ equipped with the intersection form $\omega : H_1(\Sigma)\times H_1(\Sigma)\to \Z$ is a symplectic space. The groups $L_\varepsilon$ are Lagrangian subgroups of ($H_1(\Sigma),\omega)$ generated by the curves $\varepsilon$. 

The homology of $M$ can be fully described in terms of the subgroups $L_\alpha$ and $L_\beta$ using the Mayer–Vietoris exact sequence associated with the Heegaard splitting:
\begin{equation}\label{eq:iso_H1_H2}
\begin{split}
H_1(M) &\cong \frac{H_1(\Sigma)}{L_\alpha + L_\beta}, \\
H_2(M) &\cong L_\alpha \cap L_\beta.
\end{split}
\end{equation}

We now focus more specifically on the cohomology ring of $M$. Using the Poincaré duality and the universal coefficient theorem, we can recover up to torsion all the cohomology groups of $M$ from $H^1(M)$. Moreover, the ring structure of $H^*(M)$ is entirely determined by the triple-cup product form
\[
\begin{array}{cccc}
\mu : & H^1(M)\times H^1(M)\times H^1(M) & \longrightarrow & \Z \\
      & (x,y,z) & \mapsto & \langle x\smile y\smile z, [M] \rangle.
\end{array}
\]
Indeed, for $u,v\in H^1(M)$, the map $\mu(u,v,\cdot)\in \mathrm{Hom}(H^1(M),\Z)$ corresponds by Poincaré duality to the class $u\smile v$ in $H^2(M)/\mbox{Tors}(H^2(M))$. 

It is well-known how to compute $\mu$ from a surgery presentation of $M$ using Milnor's invariants of the surgery link (see for instance \cite{Turaev84}). However, even if $\mu$ is a classical topological invariant, we are not aware of any explicit computation in the literature from a Heegaard diagram of $M$. Such formulas in terms of Heegaard diagrams do exist for other classical invariants such as the linking form \cite{ConwayFrieldHerrmann, BirmanJohnsonPutman}. But it is important to note that the computation of the linking form depends only on the homomorphism map induced by the gluing map in homology of the Heegaard splitting, which is not the case for the triple-cup product form. 

In this paper, we describe a formula for computing the triple-cup product form of a $3$-manifold from a Heegaard diagram in terms of the identification (\ref{eq:iso_H1_H2}). More precisely, using the Poincaré duality, the cup product of two cocycles in $H^1(M)$ can be regarded as a $1$-cycle in $H_1(\Sigma)$ mod $(L_\alpha + L_\beta)$. Our first result, Theorem~$\mathrm{\ref{thm:A}}$ below, describes how to build such $1$-cycles using arcs sitting in $\alpha$-curves and $\beta$-curves, and coefficients determined from the intersection pattern of these curves. The explicit definition of these arcs and these coefficients is too long to be stated here, but is detailed in Section \ref{description}. 

\begin{customthm}{A}[see Theorem \ref{1-cycle}]
\label{thm:A}
Let $x$ and $x'\in H^1(M)$. Let $c_\alpha$ be a disjoint union of $\alpha$-curves and $c_\beta'$ be a disjoint union of $\beta$-curves, such that $[c_\alpha]$ and $[c_\beta']$ are homology classes in $L_\alpha \cap L_\beta$ associated by Poincaré duality to $x$ and $x'$, respectively. Then the cup product $x\smile x'\in H^2(M)\cong H_1(M)$ is represented by a $1$-cycle in $\Sigma$ of the following form:
\[
\sum_{k,i} r_i^k a_i^k +\sum_{k,i} s_i^k b_i^k
\]
where $a_i^k$ (respectively $b_i^k$) are arcs in $c_\alpha$ (respectively in $c_\beta'$), and $r_i^k$, $s_i^k$ are coefficients in $\Z$. 
\end{customthm}

We refer to Section \ref{description} for an explicit definition of the arcs $a_i^k$, $b_i^k$ and the coefficients $r_i^k$, $s_i^k$. Then, the triple cup product $x\smile x'\smile x''$ for $x,x,'x''\in H_1(M)$ is obtained as the algebraic number of intersections between the $1$-cycle of Theorem $\mathrm{\ref{thm:A}}$ and some $\alpha$-curves representing $x''$ (see Corollary \ref{triple_cup_product}). Section~\ref{proof} is devoted to the proof of the formula. 

Besides, Turaev introduced a homotopy intersection form $\eta$ of surfaces in \cite{Turaev79}. In Section~\ref{Turaev}, we deduce from Theorem $\mathrm{\ref{thm:A}}$ that the cup product in $M$ can be recovered as a certain reduction $\overline{\eta}$ of Turaev’s homotopy intersection form of the Heegaard surface $\Sigma$. By denoting $\pi:=\pi_1(\Sigma\setminus\mathrm{int}(D))$ where $D$ is a small disk in $\Sigma$, $A:=\mathrm{ker}(\pi\to\pi_1(H_\alpha))$ and $B:=\mathrm{ker}(\pi\to\pi_1(H_\beta))$, we have the following: 

\begin{customthm}{B}[see Theorem \ref{thm:eta_varphi}]
\label{thm:B}

The following diagram is commutative 

\begin{center}
\begin{tikzcd}
\displaystyle{\frac{A\cap B[\pi,\pi]}{A\cap [\pi,\pi]}} \arrow[d,shift left=0.5cm, "\cong"] \hspace{-1cm}& \times &\hspace{-1cm}  \displaystyle{\frac{B\cap A[\pi,\pi]}{A\cap [\pi,\pi]}}\arrow[d,"\cong",shift left=-0.5cm] \arrow[rr,"\overline{\eta}"]& & \displaystyle{\frac{H_1(\Sigma)}{L_\alpha + L_\beta}}\arrow[dd,"\cong"] \\
L_\alpha \cap L_\beta \arrow[d,"\cong",shift left=0.5cm] \hspace{-1cm}&&\hspace{-1cm} L_\alpha \cap L_\beta\arrow[d,"\cong",shift left=-0.5cm]  &&\\
H^1(M) \hspace{-1cm}& \times &\hspace{-1cm} H^1(M) \arrow[rr,"\smile"] & & H^2(M) 
\end{tikzcd}
\end{center}

\end{customthm}

In particular, we recover from Theorem $\mathrm{\ref{thm:B}}$ some classical properties of the triple-cup product, such as skew-symmetry and invariance under twists along surfaces by elements of the Johnson kernel (see Subsection~\ref{varphi_properties}). Finally, Section~\ref{section:examples} presents two examples of computation of cohomology rings from Heegaard diagrams.

\begin{ack}
The author would like to thank her advisors Renaud Detcherry and Gwénaël Massuyeau for many helpful discussions and their careful proofreading. This work has been achieved at the IMB (Dijon) which receives support from of the EIPHI Graduate school (contract "ANR-17-EURE-0002").
\end{ack}

\section{Description of the formula}
\label{description}

Let $M$ be a connected closed oriented $3$-manifold and let $(\Sigma,\alpha,\beta)$ be a Heegaard diagram of $M$. We fix an orientation for the curves $\alpha$ and $\beta$ of the diagram. Using the Poincaré duality and the Mayer-Vietoris exact sequence, we can describe the elements of $ H^1(M) $ by pairs of homologous curves in the Heegaard surface:
\[
H^1(M) \cong H_2(M) \cong L_{\alpha} \cap L_{\beta} \cong \left\{ \left( \left[ c_{\alpha} \right], \left[ c_{\beta} \right] \right) \in L_{\alpha} \times L_{\beta} \mid \left[ c_{\alpha} \right] = \left[ c_{\beta} \right] \in H_1(\Sigma)\right\}.
\]

An element $\left[ c_{\alpha} \right] \in L_{\alpha}$ can be written as $\left[ c_{\alpha} \right] = \sum_{i=1}^{g} \lambda_i \left[ \alpha_i \right]$. It is then represented by the disjoint union of $|\lambda_i|$ parallel copies of $\alpha_i$ (the sign of $\lambda_i$ indicates whether the orientation corresponds to the one fixed above).
Thus, we can describe the elements of $ H^1(M) $ by pairs of multicurves $(c_{\alpha}, c_{\beta})$, homologous in $ H_1(\Sigma) $, obtained as a union of parallel copies of $\alpha$ and $\beta$ curves. 

We start by calculating the cup product of two elements $ x, x' \in H^1(M) $. We choose $(c_{\alpha}, c_{\beta})$ a pair of oriented homologous multicurves representing $x$, and $(c_{\alpha}', c_{\beta}')$ a pair of oriented homologous multicurves representing $x'$. We denote by $\varphi$ the map defined by the commutative diagram:

\vspace{0.5cm}

\begin{center}
\begin{tikzcd}
\begin{matrix}H^1(M)\times H^1(M)\\ (x,x')\end{matrix} \arrow[r, "\smile"] \arrow[d,"\cong"] & \begin{matrix}H^2(M)\\x\smile x'\end{matrix} \arrow[d, "\cong"] \\ 
\begin{matrix}(L_\alpha \cap L_\beta)\times(L_\alpha \cap L_\beta)\\ ([c_\alpha],[c'_\beta])\end{matrix} \arrow[r, "\varphi"] & \begin{matrix}H_1(M)\\\varphi(c_\alpha,c'_\beta)\end{matrix}
\end{tikzcd}
\end{center}

\vspace{0.5cm}

This section provides a formula for calculating the 1-cycle $\varphi(c_{\alpha}, c_{\beta}') \in H_1(M)$, which can be decomposed into two parts: a 1-chain expressed in terms of arcs of $c_{\alpha}$ and a 1-chain expressed in terms of arcs of $c_{\beta}'$. 

We begin with the part of $\varphi(c_{\alpha}, c_{\beta}')$ contained in $c_{\alpha}$. We denote by $c_{\alpha}^1, \ldots, c_{\alpha}^N$ the $N$ connected components of $c_{\alpha}$. The algebraic intersection of $c_{\beta}'$ with each $c_{\alpha}^k$ is trivial because $[c_{\beta}'] \in L_{\alpha}$. Therefore, there exists $n_k \in \mathbb{N}$ such that the component $c_{\alpha}^k$ intersects $c_{\beta}'$ $2n_k$ times with $n_k$ positive intersections and $n_k$ negative intersections (the intersections may involve several components of $c_{\beta}'$).

For $k \in \{1, \ldots, N\}$, we number the arcs of $c_{\alpha}^k \setminus (c_{\alpha}^k \cap c_{\beta}')$ from 1 to $2n_k$ choosing a base point on $c^k_\alpha$ and following the given orientation of $c_{\alpha}^k$. Taking indices modulo $2n_k$, we set $\varepsilon_i(c'_\beta,c^k_\alpha) = 1$ if $c_{\beta}'$ intersects $c_{\alpha}^k$ positively between arcs $i-1$ and $i$, and $\varepsilon_i(c'_\beta,c^k_\alpha) = -1$ if the intersection is negative. We define a sequence of integers $(r_i^k)_{1 \leq i \leq 2n_k}$ by
\[ r_i^k = \sum_{j=1}^{i-1} \varepsilon_j(c_\beta',c^k_\alpha).\]

See Figure $\ref{fig:notations}$ for an example.

\begin{rqe} 
If the component $c_{\alpha}^k$ does not intersect $c_{\beta}'$, it will not contribute to $\varphi(c_{\alpha}, c_{\beta}')$ and we do not take it into account.
\end{rqe}

\begin{figure}[h]
\includegraphics[width=0.5\textwidth]{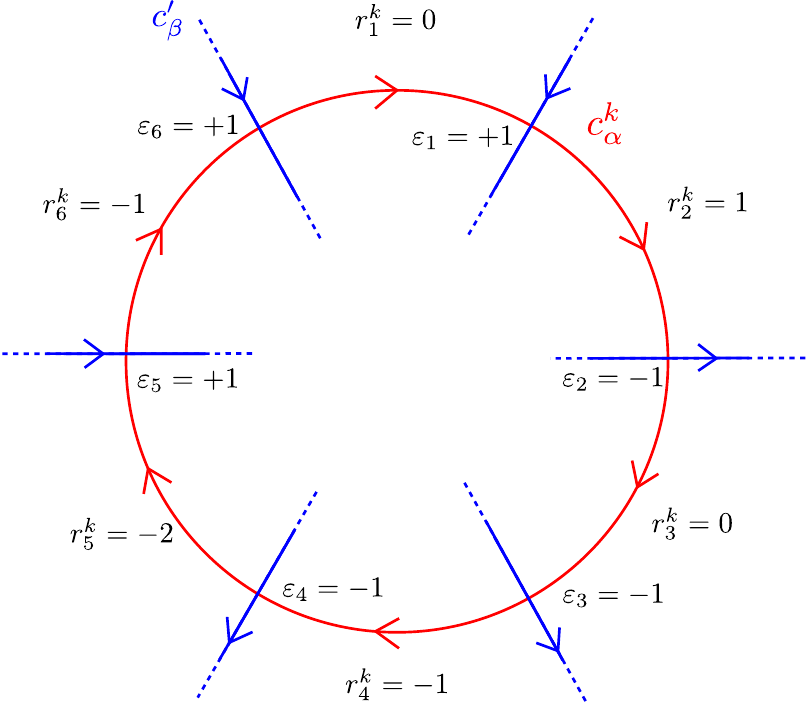}
\captionof{figure}{Coefficients $r_i^k$.}
\label{fig:notations}
\end{figure}

We then proceed similarly to compute the part of $\varphi(c_{\alpha}, c_{\beta}')$ coming from $c_{\beta}'$. As before, we denote by $c_{\beta}'^1, \ldots, c_{\beta}'^M$ the $M$ components of $c_{\beta}'$. For $k \in \{1, \ldots, M\}$, the algebraic intersection of a component $c_{\beta}'^k$ with $c_{\alpha}$ is trivial because $[c_{\alpha}] \in L_{\beta}$. Therefore, there exists $m_k \in \mathbb{N}$ such that $c_{\beta}'^k$ intersects $c_{\alpha}$ $2m_k$ times, with $m_k$ positive intersections and $m_k$ negative intersections. We label the arcs of $c_{\beta}'^k \setminus (c_{\beta}'^k \cap c_{\alpha})$ from 1 to $2m_k$ choosing a base point and following the given orientation of $c_{\beta}'^k$, then we introduce $(s_i^k)_{1 \leq i \leq 2m_k}$ a sequence of relative integers defined by:
\[s_i^k = \sum_{j=1}^{i-1} \varepsilon_j(c_\alpha,c'^k_\beta).\]
where $\varepsilon_i(c_\alpha,c'^k_\beta) \in \{1, -1\}$ is the sign of the intersection point of $c_{\alpha}$ with $c_{\beta}'^k$ between the arcs $i-1$ and $i$. 

Now that the coefficients $r_i^k$ and $s_i^k$ have been explicitly defined, we can state a precise version of Theorem $\mathrm{\ref{thm:A}}$ presented in the introduction.  

\begin{thm}
\label{1-cycle}
Denoting $a_i^k$ the closure of the $i$-th arc in $c_{\alpha}^k$ and $b_i^k$ the closure of the $i$-th arc in $c_{\beta}'^k$, $\varphi(c_{\alpha}, c_{\beta}')$ is represented by the following 1-cycle in $H_1(M)$:
\[
\sum_{k=1}^N \sum_{i=1}^{2n_k} r_i^k a_i^k + \sum_{k=1}^M \sum_{i=1}^{2m_k} s_i^k b_i^k
\]
\end{thm}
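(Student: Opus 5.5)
Via Poincaré duality, the cup product $x\smile x'$ is dual to the transverse intersection of two surfaces representing the classes in $H_2(M)$. So I will build explicit closed oriented surfaces $S$ and $S'$ dual to $x$ and $x'$, put them in general position, and read off the 1-cycle $S\cap S'$ on $\Sigma$.

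\textbf{Building the surfaces.} Since $[c_\alpha]=[c_\beta]$ in $H_1(\Sigma)$, take $S=D_\alpha\cup P\cup D_\beta$, where:
\begin{itemize}
\item $D_\alpha\subset H_\alpha$ is a union of parallel meridian discs bounded by $c_\alpha$;
\item $D_\beta\subset H_\beta$ is a union of parallel meridian discs bounded by $c_\beta$;
\item $P$ is a 2-chain in a collar $\Sigma\times[-1,1]$ with $\partial P=c_\alpha\times\{-1\}-c_\beta\times\{1\}$.
\end{itemize}
Build $S'$ the same way from $(c'_\alpha,c'_\beta)$, but with the collar roles arranged so that $S'$ meets the collar only in a vertical piece over $c'_\beta$ together with its own cobordism. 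Up to sign conventions, $x\smile x'$ is Poincaré dual to $[S\cap S']\in H_1(M)$, which in turn is identified with the image of a cycle in $H_1(\Sigma)/(L_\alpha+L_\beta)$.

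\textbf{A better choice of surfaces.} The point is to choose $S$ and $S'$ so that their intersection projects to arcs of $c_\alpha$ and $c'_\beta$. Rather than using $P$ directly, I would push $c_\alpha$ slightly into $\Sigma\times\{0\}$ and take $S'$ to be:
\begin{itemize}
\item the discs bounded by $c'_\beta$ in $H_\beta$;
\item the vertical annulus $c'_\beta\times[0,1]$;
\item a 2-chain $Q$ in $\Sigma\times\{0\}$ (tilted slightly) with $\partial Q=c'_\beta-c'_\alpha$, capped by the $\alpha$-discs of $c'_\alpha$.
\end{itemize}
Symmetrically, $S$ uses the $\alpha$-discs of $c_\alpha$, the annulus $c_\alpha\times[-1,0]$, and a 2-chain $R$ with $\partial R=c_\alpha-c_\beta$ at a different level, capped by the $\beta$-discs.

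The intersections then come in two kinds:
\begin{itemize}
\item Where the annulus over $c_\alpha$ meets the 2-chain part of $S'$, the intersection is the arcs $a_i^k$ weighted by the local multiplicity of $Q$. Crossing $c'_\beta$ changes that multiplicity by $\pm1$ with sign $\varepsilon_i(c'_\beta,c_\alpha^k)$. So the weight on arc $i$ is $r_i^k$ plus a constant $C_k$, the multiplicity at the base point.
\item The annulus over $c'_\beta$ meeting $R$ gives the $s_i^k b_i^k$ terms in the same way.
\end{itemize}
Discs in different handlebodies do not meet, and discs in the same handlebody can be taken disjoint from the rest. Extra pieces such as $Q\cap R$ lie in different levels and can be made disjoint by a perturbation.

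\textbf{Removing the constant offsets.} The constants contribute $C_k[c_\alpha^k]\in L_\alpha$ and $D_k[c'^k_\beta]\in L_\beta$, which vanish in $H_1(\Sigma)/(L_\alpha+L_\beta)\cong H_1(M)$. So the choice of base point does not matter, and the formula holds. I would also check that $\sum_i r_i^k a_i^k$ is not a cycle on its own but becomes one when combined with the $b$-part: at each intersection point the boundary jumps by $\pm1$ on both sides, and these cancel.

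\textbf{Main obstacle.} The hard step is the geometric bookkeeping: choosing the levels, the tilting of the 2-chains, and the orientations so that the intersection is exactly as described with the right signs, and verifying that $[S\cap S']$ corresponds to $x\smile x'$ under the isomorphism chosen in the paper. Since the 2-chains $Q$ and $R$ are only chains and not embedded surfaces, I would work with intersection of chains in general position, or resolve multiplicities into parallel copies. I would fix the overall sign by testing on a simple example such as $T^3$.
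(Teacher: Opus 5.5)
Your argument is correct, but it takes a different route from the paper at the key step. The global arrangement is the same: in the paper, the annulus over $c_\alpha$ crosses the slab containing $T'\subset\Sigma\times[0,\tfrac13]$, and the annulus over $c'_\beta$ crosses the slab containing $T\subset\Sigma\times[\tfrac23,1]$.

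\textbf{What the paper does.} It builds $T'$ as an embedded surface by successive saddle surgeries of $c'_\beta$ along the arcs of $c_\alpha$ where the step function $\rho$ is currently maximal. This makes $T'\cap(c_\alpha\times[0,\tfrac13])$ equal to $\sum r_i^k a_i^k$ exactly at the chain level. The leftover multicurve $\sigma_{t_R}$ is disjoint from $c_\alpha$. Lemma~\ref{lemma0} (isotoping $c'_\alpha$ and adding pairs $\pm\alpha_i$) is then needed so that the flat piece $U'$ joining $\sigma_{t_R}$ to $c'_\alpha$ misses $c_\alpha\times[0,t_R]$.

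\textbf{What you do instead.} You replace $T'$ by a 2-chain $Q$ in one level and read off its local multiplicity along $c_\alpha$. That multiplicity is the running sum of crossing signs plus a constant $C_k$ on each component. These constants are exactly the multiplicities of $U'$ along $c_\alpha$ that Lemma~\ref{lemma0} is designed to kill. You dispose of them by noting that $C_k[c_\alpha^k]\in L_\alpha$ and $D_k[c'^k_\beta]\in L_\beta$ vanish in $H_1(\Sigma)/(L_\alpha+L_\beta)$.

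\textbf{What each route buys.} Yours is shorter: there is no saddle bookkeeping, no need for Lemma~\ref{lemma0}, and base-point independence comes for free. The price is that $S$ and $S'$ are 2-cycles rather than embedded surfaces. You therefore need either chain-level intersection in general position, or the standard resolution of a 2-chain in $\Sigma$ into an embedded surface in a thin collar. You also obtain the formula only modulo $L_\alpha+L_\beta$, but that is all the statement asserts. The paper's route gives an explicit embedded surface and an exact chain-level identity.

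\textbf{Things to fix in the write-up.}
\begin{enumerate}
\item As written, $S$ contains only $c_\alpha\times[-1,0]$, which stops at the level of $Q$. You need, say, $Q$ at level $0$, $R$ at level $\tfrac12$, and $S\supset c_\alpha\times[-1,\tfrac12]\cup c_\beta\times[\tfrac12,1]$. The two annuli then also meet in vertical segments $(c_\alpha\cap c'_\beta)\times[0,\tfrac12]$. These join your two families of arcs and project to points.
\item Say explicitly that the multiplicity of $Q$ along $c_\alpha$ jumps only at $c'_\beta$ because $c_\alpha\cap c'_\alpha=\emptyset$ (they are parallel copies of disjoint $\alpha$-curves). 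Likewise $c_\beta\cap c'_\beta=\emptyset$ for $R$ along $c'_\beta$.
\item The cycle condition you noted already forces the relative sign of the two parts. Only one universal local sign remains, and it is better computed at a single crossing with the orientation convention the paper uses than fixed by testing on $T^3$.
\end{enumerate}
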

A proof of Theorem $\ref{1-cycle}$ is given in Section \ref{proof}, as a direct corollary of Lemmas \ref{lemma1} and \ref{lemma2}. 

To compute the triple-cup product, we introduce a third element $x'' \in H^1(M)$, to which we associate a pair of multicurves $(c_{\alpha}'', c_{\beta}'')$. We have the following result:

\begin{corollary}
\label{triple_cup_product}
The triple-cup product $\mu(x, x', x'')$ is given by the algebraic number of intersections between the 1-cycle $\varphi(c_{\alpha}, c_{\beta}')$ and the multicurve $c_{\alpha}''$ on the surface $\Sigma$.
\end{corollary}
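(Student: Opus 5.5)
The plan is to reduce the corollary to Theorem~\ref{1-cycle} together with the standard duality between cup product and intersection. By definition,
\[
\mu(x,x',x'') = \langle (x\smile x')\smile x'', [M]\rangle .
\]
Write $D: H^k(M)\to H_{3-k}(M)$ for Poincaré duality, $D(u)=u\frown[M]$. The basic identity is
\[
\langle u\smile x'', [M]\rangle = \langle x'', D(u)\rangle .
\]
Applied with $u = x\smile x'\in H^2(M)$, this gives $\mu(x,x',x'') = \langle x'', D(x\smile x')\rangle$. In words, $\mu$ is the Kronecker pairing of $x''$ with the $1$-cycle dual to $x\smile x'$. By Theorem~\ref{1-cycle}, that $1$-cycle is the class in $H_1(M)$ of
\[
\varphi(c_\alpha,c'_\beta)=\sum_{k,i} r_i^k a_i^k + \sum_{k,i} s_i^k b_i^k ,
\]
a $1$-cycle lying on $\Sigma$. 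Care is needed with the orientation conventions fixed in the diagram defining $\varphi$. That is, one must check that the isomorphism $H^2(M)\cong H_1(M)$ used there is exactly $D$, and that it matches the identification of $H^1(M)$ with $L_\alpha\cap L_\beta$ via $H_2(M)$.

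Next I interpret $\langle x'', \gamma\rangle$ geometrically for a $1$-cycle $\gamma$ on $\Sigma$. The class $x''$ is Poincaré dual to a closed oriented surface $S''\subset M$ representing the element of $H_2(M)\cong L_\alpha\cap L_\beta$ given by $[c''_\alpha]=[c''_\beta]$. I build $S''$ from the Mayer--Vietoris description. For each component of $c''_\alpha$ take a parallel copy of the corresponding meridian disc in $H_\alpha$. Do the same for $c''_\beta$ with discs in $H_\beta$. Then connect the two families along a $2$-chain in a collar $\Sigma\times[-1,1]$ that realizes the homology $c''_\alpha\sim c''_\beta$ in $\Sigma$. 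With this $S''$ in hand, $\langle x'',\gamma\rangle = \gamma\cdot S''$, the algebraic intersection number in $M$.

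I then push $\gamma=\varphi(c_\alpha,c'_\beta)$ slightly into the $\alpha$-side of the collar, at level $\Sigma\times\{t_0\}$. There it meets $S''$ only where $S''$ crosses this level. I arrange the connecting chain so that it sits on the $\beta$-side of $t_0$. Then $S''\cap(\Sigma\times\{t_0\})$ is exactly $c''_\alpha\times\{t_0\}$, coming from the vertical part of the $\alpha$-discs. The intersection $\gamma\cdot S''$ in $M$ therefore equals the algebraic intersection $\omega(\varphi(c_\alpha,c'_\beta), c''_\alpha)$ on $\Sigma$, up to a global sign fixed by conventions.

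Independence from choices gives a consistency check. Changing $\varphi$ by an element of $L_\alpha+L_\beta$ should not change the answer. Pairing with $c''_\alpha$ kills $L_\alpha$, because $L_\alpha$ is Lagrangian. It also kills $L_\beta$, because $[c''_\alpha]=[c''_\beta]\in L_\beta$, which is Lagrangian. So $\omega(\cdot,[c''_\alpha])$ descends to $H_1(\Sigma)/(L_\alpha+L_\beta)\cong H_1(M)$, as it must.

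The main obstacle I expect is the sign and orientation bookkeeping. This means matching the orientation of $S''$, the side onto which $\gamma$ is pushed, and the convention $M=H_\alpha\cup(-H_\beta)$, so that the result is exactly $\omega(\varphi,c''_\alpha)$ and not its negative. I would settle this by checking it on a simple example, such as $T^3$ or a connected sum of copies of $S^1\times S^2$, once the general identity is established.
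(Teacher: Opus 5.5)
Your proposal is correct and is essentially the paper's argument. The paper likewise builds $S''=D''_\alpha\cup(c''_\alpha\times[0,\tfrac12])\cup T''\cup D''_\beta$ with the connecting surface $T''$ on the $\beta$-side, places the $1$-cycle $\varphi(c_\alpha,c'_\beta)$ representing $x\smile x'$ at the level $\Sigma\times\{\tfrac14\}$, and reads off $\mu(x,x',x'')$ as the intersection with $c''_\alpha$ on $\Sigma$. Your Kronecker-pairing identity and your check that $\omega(\cdot,[c''_\alpha])$ vanishes on $L_\alpha+L_\beta$ spell out what the paper leaves implicit; the paper, like you, does not pin down the global sign.
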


We give a proof of Corollary $\ref{triple_cup_product}$ in the following section.

\section{Proof of the triple-cup product formula}
\label{proof}

To compute the cup product of two elements $x, x' \in H^1(M)$, we use the formula
\begin{equation*}
\label{eq:ScapS'}
x \smile x' = P([S' \cap S])
\end{equation*}
where $P$ denotes the Poincaré isomorphism and $S, S'$are surfaces representing $P(x)$ and $P(x')$ whose intersection is transverse in $M$. 

\begin{rqe}
Here, we follow the sign conventions of \cite{Bredon} to orient $S'\cap S$. For $p\in S'\cap S$, take $\vec{\tau} \in T_p(S'\cap S)$, then complete it with $\vec{n}\in T_pS$ so that $(\vec{\tau},\vec{n})$ is a frame of $T_pS$ consistent with the orientation of $S$; then, complete it with a vector $\vec{n}'\in T_pS'$ so that $(\vec{\tau}, \vec{n}')$ is a frame of $T_pS'$ consistent with the orientation of $S'$; if $(\vec{\tau},\vec{n},\vec{n}')$ is consistent with the orientation of $T_pM$, then the orientation of $S'\cap S$ is given by $\vec{\tau}$; if not, the orientation of $S'\cap S$ is given by $-\vec{\tau}$. 
\end{rqe}

Let us describe how to construct such surfaces $S$ and $S'$ in the manifold $M$. As in the previous section, $(c_{\alpha}, c_{\beta})$ denotes a pair of multicurves representing the cohomology class $x$ and $(c_{\alpha}', c_{\beta}')$ a pair of multicurves representing the cohomology class $x'$. In particular, we have:
\[
\begin{aligned}
& [c_{\alpha}] = [c_{\beta}] \\
& [c_{\alpha}'] = [c_{\beta}']
\end{aligned}
\]
Using the Heegaard splitting of $M$, we can describe $M$ as the union of two handlebodies $H_{\alpha}$ and $H_{\beta}$, glued along a thickening of the central surface:
\[
M = H_{\alpha} \cup_{\Sigma \times \{0\}} (\Sigma \times [0,1]) \cup_{\Sigma \times \{1\}} H_{\beta}
\]

We denote by $D_{\alpha}$ and $D_{\beta}$ the disjoint unions of meridian disks bounded by $c_{\alpha}$ and $c_{\beta}$ in $H_{\alpha}$ and $H_{\beta}$. Then the surface $S$ can be constructed as follows:
\[
S = D_{\alpha} \cup (c_{\alpha} \times [0, \tfrac{2}{3}]) \cup T \cup D_{\beta}
\]
where $T$ is a surface contained in $\Sigma \times [\frac{2}{3}, 1]$ satisfying $\partial T = c_{\alpha} - c_{\beta}$. Similarly, denoting by $D_{\alpha}', D_{\beta}'$ the disjoint unions of meridian disks bounded by $c_{\alpha}'$ and $c_{\beta}'$ in $H_{\alpha}$ and $H_{\beta}$, we can decompose $S'$ as follows:
\[
S' = D_{\alpha}' \cup T' \cup (c_{\beta}' \times [\frac{1}{3}, 1]) \cup D_{\beta}'
\]
where $T'$ is a surface contained in $\Sigma \times [0, \frac{1}{3}]$ satisfying $\partial T' = c_{\alpha}' - c_{\beta}'$ (see Figure \ref{surfaces}).

\vspace{0.5cm}

\begin{figure}[h]
\includegraphics[scale=0.3]{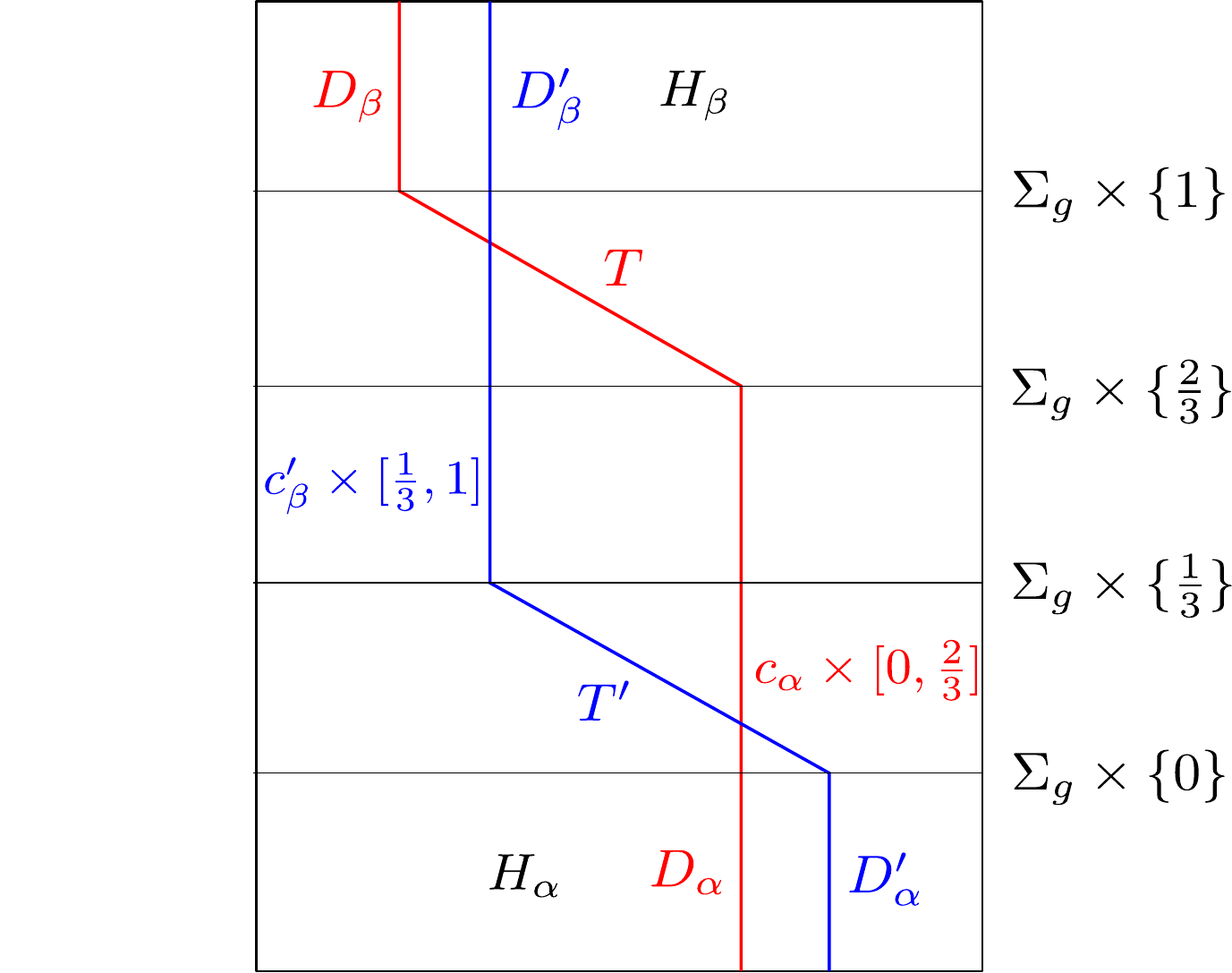}
\captionof{figure}{}
\label{surfaces}
\end{figure}

Then we can express the intersection between the surfaces $S$ and $S'$:

\begin{equation}\label{intersection_transverse}
S'\cap S = \Bigl((c_\alpha\times[0,\tfrac{1}{3}])\cap T' \Bigr) \cup \Bigl((c_\alpha\cap c'_\beta)\times [\tfrac{1}{3},\tfrac{2}{3}]\Bigl) \cup \Bigl(  (c'_\beta\times[\tfrac{2}{3},1])\cap T \Bigr) \end{equation} 

\begin{rqe} Figure \ref{surfaces} represents a schematic version of the intersection $S \cap S'$. Since the surfaces are represented by 1-dimensional segments, the part $(c_{\alpha} \cap c_{\beta}') \times [\frac{1}{3}, \frac{2}{3}]$ is not visible.
\end{rqe}

The following Lemma ensures that if two multicurves are homologous in the surface $\Sigma$ and do not intersect a third multicurve $c$, then, after a slight modification, they remain homologous in $\Sigma\setminus\nu(c)$. This fact will be used later in the proof of Theorem \ref{1-cycle}. 

\begin{lemma}
\label{lemma0}
Let $c_1 = \bigsqcup_{1 \leq i \leq g} n_i \alpha_i$ and $c_2 = \bigsqcup_{1 \leq i \leq g} m_i \beta_i$ be two multicurves of $\Sigma$. Suppose that $c_1$ and $c_2$ are homologous in $H_1(\Sigma)$ and that there exists a multicurve $c = \bigsqcup_{1 \leq i \leq g} l_i \alpha_i$ satisfying $c \cap c_2 = \emptyset$. Then there exists $c_1'$, a multicurve formed by parallel copies of $\alpha_i$, satisfying $[c_1'] = [c_1]$ in $H_1(\Sigma)$ and $[c_1'] = [c_2]$ in $H_1(\Sigma \setminus \bigsqcup_{j \in J} \nu(\alpha_j))$, where $J \subset \{1, \ldots, g\}$ denotes the set of indices $j$ for which $l_j \neq 0$. 
\end{lemma}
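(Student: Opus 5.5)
Set $\Sigma' := \Sigma\setminus\bigsqcup_{j\in J}\nu(\alpha_j)$. The plan is to compute $H_1(\Sigma')$ and the kernel of $H_1(\Sigma')\to H_1(\Sigma)$ explicitly, show that $[c_2]-[c_1]$ lies in it, and then absorb the discrepancy by changing $c_1$ only through multiples of the $\alpha_j$, $j\in J$.

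First, $c_2$ lies in $\Sigma'$. Indeed, $c\cap c_2=\emptyset$ and, for $j\in J$, $c$ contains at least one copy of $\alpha_j$, so $c_2$ misses $\alpha_j$ and hence a small neighbourhood $\nu(\alpha_j)$. Since the $\alpha_j$ are disjoint and jointly non-separating, $\Sigma'$ is a connected surface of genus $g-|J|$ with $2|J|$ boundary circles $\alpha_j^{\pm}$, the two push-offs of $\alpha_j$. Similarly, for $i\notin J$ the curves $\alpha_i$ can be isotoped off $\nu(\alpha_j)$, so $c_1$ may be viewed in $\Sigma'$ after discarding its components that are copies of $\alpha_j$ with $j\in J$; this discarding is where the passage to $c_1'$ happens.

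Second, I will identify the kernel of $H_1(\Sigma')\to H_1(\Sigma)$. Gluing back the annuli $\nu(\alpha_j)$ and applying Mayer--Vietoris shows that this kernel is generated by the differences $[\alpha_j^+]-[\alpha_j^-]$, $j\in J$. These differences are not a problem, since we may choose the copies of $\alpha_j$ in $c_1'$ on either side. The remaining point is a boundary relation: the sum of all boundary components of $\Sigma'$ is null-homologous in $\Sigma'$, but with the orientations induced from $\Sigma$ this relation reads $\sum_j([\alpha_j^+]-[\alpha_j^-])=0$. The upshot is that $H_1(\Sigma')$ modulo the span of the $[\alpha_j^\pm]$ injects into $H_1(\Sigma)/\langle[\alpha_j]:j\in J\rangle$. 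I will double-check this via the intersection pairing with curves dual to the $\alpha_j$ that cross $\nu(\alpha_j)$.

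Third, I will build $c_1'$. Let $c_1^0$ be $c_1$ with all copies of $\alpha_j$, $j\in J$, removed. Then in $H_1(\Sigma')$ we have $[c_2]-[c_1^0]=\sum_{j\in J}(p_j[\alpha_j^+]+q_j[\alpha_j^-])$, because it maps to the span of the $[\alpha_j]$ in $H_1(\Sigma)$. Since $[c_2]=[c_1]$ in $H_1(\Sigma)$, we get $p_j+q_j=n_j$ for each $j$, up to the ambiguity of the boundary relation. Define $c_1'$ as $c_1^0$ together with $p_j$ parallel copies of $\alpha_j$ pushed onto the $+$ side and $q_j$ copies pushed onto the $-$ side; each copy is isotopic to $\alpha_j$ in $\Sigma$ and lies in $\Sigma'$. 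Then $[c_1']=[c_1]$ in $H_1(\Sigma)$ and $[c_1']=[c_2]$ in $H_1(\Sigma')$.

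The main obstacle is the second step: making the injectivity claim precise, in particular handling the boundary relation and the exact form of the kernel of $H_1(\Sigma')\to H_1(\Sigma)$. I would first attempt it by writing down an explicit basis of $H_1(\Sigma')$ and tracking its image in $H_1(\Sigma)$.
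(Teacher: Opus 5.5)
Your proposal is correct and essentially follows the paper's proof: the paper pushes $c_1$ into $P=\Sigma\setminus\bigsqcup_{j\in J}\nu(\alpha_j)$ and then adds multiples of the classes $[e_j]+[e_j']$ spanning $\ker(H_1(P)\to H_1(\Sigma))$ (your $[\alpha_j^+]-[\alpha_j^-]$, with one push-off oriented the other way), which is the same as your placing $p_j$ copies of $\alpha_j$ on one side and $q_j$ on the other. Your Mayer--Vietoris computation of that kernel fills in a step the paper only asserts, and the boundary relation you flag causes no trouble: $p_j+q_j$ is read off from the image $\sum_{j\in J}(p_j+q_j)[\alpha_j]=\sum_{j\in J}n_j[\alpha_j]$ in $H_1(\Sigma)$, where the $[\alpha_j]$ are linearly independent, so $p_j+q_j=n_j$ holds exactly.
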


In particular, the multicurve $c_1'$ can be obtained from $c_1$ by isotopy and addition of pairs $(\alpha_i, -\alpha_i)$.  

\begin{proof}[Proof]
Up to renumbering the curves $\alpha_i$, we can assume that $J = \{1, \ldots, h\}$ where $h \leq g$. We denote by $P$ the surface $\Sigma \setminus (\nu(\alpha_1) \cup \ldots \cup \nu(\alpha_h))$. This is a surface of genus $g-h$ with $2h$ boundary components. We denote by $e_1, e_1', \ldots, e_h, e_h'$ the curves parallel to these boundary components so that the map induced by the inclusion $P \hookrightarrow \Sigma$ is described by:
\[
\begin{aligned}
\iota_*: H_1(P) & \longrightarrow H_1(\Sigma) \\
[e_i] & \longmapsto [\alpha_i] \\
[e_i'] & \longmapsto -[\alpha_i]
\end{aligned}
\]

Let $c_1''$ be a multicurve in $P$ isotopic to $c_1$ in $\Sigma$. It can be obtained by slightly isotoping the $\alpha_i$ curves on the side of $e_i$ or $e_i'$. Since $c_2$ and $c_1''$ are homologous in $\Sigma$, it exists $\lambda_i\in\Z$ such that
\[ 
[c_2] = [c_1'' + \sum_{i=1}^h \lambda_i(e_i + e_i')] \in H_1(P)
\]
Then we set $c_1':=c_1'' +  \sum_{i=1}^h \lambda_i (e_i+e_i')  \in \Sigma$. We can verify that $[c_1']=[c_2]\in H_1(P)$ and $[c_1']=[c_1]\in H_1(\Sigma)$. 
\end{proof}

In what follows, we use $(\ref{intersection_transverse})$ to express the intersection $S'\cap S$ in terms of arcs extracted from the multicurves $c_\alpha$ and $c_\beta'$ in the surface $\Sigma$. We begin by considering the intersection between $T'$ and a thickening of $c_\alpha$. 

\begin{lemma}
\label{lemma1}
The intersection $(c_{\alpha} \times [0, \frac{1}{3}]) \cap T'$ is represented by the 1-chain $\sum_{k=1}^N \sum_{i=1}^{2n_k} r_i^k a_i^k$.
\end{lemma}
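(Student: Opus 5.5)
The plan is to construct the surface $T'\subset\Sigma\times[0,\tfrac13]$ explicitly enough that its intersection with the vertical annuli $c_\alpha\times[0,\tfrac13]$ can be read off arc by arc. The boundary condition is $\partial T'=c'_\alpha-c'_\beta$, with $c'_\alpha$ at level $0$ and $c'_\beta$ at level $\tfrac13$. By Lemma \ref{lemma0}, applied with $c_1=c'_\alpha$, $c_2=c'_\beta$ and $c=c_\alpha$ (after isotoping $c_\alpha$ slightly off itself, so that the relevant curve avoids it), I can replace $c'_\alpha$ by a multicurve $c''_\alpha$ made of parallel copies of $\alpha$-curves. This $c''_\alpha$ is disjoint from $c_\alpha$ and satisfies $[c''_\alpha]=[c'_\alpha]$ in $H_1(\Sigma)$. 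Adding pairs $(\alpha_i,-\alpha_i)$ and isotoping does not change the class $x'$, nor the disks $D'_\alpha$ up to isotopy. Hence I may assume that $c'_\alpha$ is disjoint from $c_\alpha$ and that $c'_\alpha-c'_\beta$ bounds a $2$-chain $F$ in $\Sigma$. I would take $T'$ to be (a smoothing of) the "graph" of this chain pushed into $\Sigma\times[0,\tfrac13]$.

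Concretely, $c'_\alpha-c'_\beta=\partial F$, where $F=\sum_R w_R\,R$ runs over the regions $R$ of $\Sigma\setminus(c'_\alpha\cup c'_\beta)$ and the integer weights $w_R$ jump by $\pm1$ across the curves. Realize $F$ as an embedded surface by stacking $|w_R|$ parallel sheets over each region $R$ at distinct heights in $(0,\tfrac13)$, with the sheets connected along $c'_\alpha$ and $c'_\beta$ by vertical strips. The intersection of $T'$ with $c_\alpha\times[0,\tfrac13]$ then lies above $c_\alpha$: over an arc $a_i^k$ it consists of $|w|$ horizontal copies of that arc, where $w$ is the weight of the region of $\Sigma\setminus c'_\beta$ containing the arc (regions cut by $c'_\alpha$ are irrelevant, since $c_\alpha\cap c'_\alpha=\emptyset$). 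At the points of $c_\alpha\cap c'_\beta$ these copies are joined by vertical segments at level near $\tfrac13$. Homologically, the intersection is therefore the $1$-chain $\sum_{k,i} w(a_i^k)\,a_i^k$, with the vertical pieces matching up with the segments $(c_\alpha\cap c'_\beta)\times[\tfrac13,\tfrac23]$ of (\ref{intersection_transverse}).

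It then remains to identify $w(a_i^k)$ with $r_i^k$. Crossing $c'_\beta$ changes the weight of $F$ by $\pm1$, with sign given by the local intersection sign. Thus, going along $c^k_\alpha$, one gets $w(a_i^k)-w(a_{i-1}^k)=\varepsilon_i(c'_\beta,c^k_\alpha)$, and hence $w(a_i^k)=w(a_1^k)+r_i^k$. The constant $w(a_1^k)$ is not forced to vanish, but changing it alters the chain by a multiple of the closed curve $c^k_\alpha$, which lies in $L_\alpha$. This is harmless for the cycle in $H_1(M)$. Alternatively, one can normalize $F$ near a base point, or choose the base point on $c^k_\alpha$ in a region of weight $0$, which should be possible after adding copies of $\alpha$-curves to $c'_\alpha$. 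The consistency of this choice is guaranteed by the vanishing algebraic intersection $\sum_i\varepsilon_i=0$.

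The main obstacle is the sign bookkeeping. One has to check, with the Bredon convention of the Remark, that each horizontal copy of $a_i^k$ inherits the orientation of $c_\alpha$ (not its opposite) when the sheet's orientation is that induced by $F$, and that the jump across $c'_\beta$ is $+\varepsilon_i$ rather than $-\varepsilon_i$. I would verify this in a single local model near one positive intersection point of $c'_\beta$ with $c_\alpha$, computing the frame $(\vec\tau,\vec n,\vec n')$ directly, and then argue that all other cases follow by reversing orientations.
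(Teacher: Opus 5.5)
Your argument is correct in substance, but it takes a different route from the paper.

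\textbf{How the two routes differ.} The paper builds one explicit $T'$. Starting from $\sigma_{t_0}=c'_\beta$, it performs $R$ rounds of saddle surgeries along the arcs of $c_\alpha$ where the step function $\rho$ is currently maximal, until $\sigma_{t_R}$ misses $c_\alpha$. It then applies Lemma~\ref{lemma0} with $c_2=\sigma_{t_R}$, which does avoid $c_\alpha$, to cap off by a flat piece $U'$ disjoint from $c_\alpha\times[0,t_R]$. Finally it counts the nested saddle arcs above each $a_i^k$.

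You instead realize an arbitrary $2$-chain $F$ with $\partial F=c'_\alpha-c'_\beta$ and read the coefficients off its weight function along $c^k_\alpha$, whose jumps are the local intersection signs. This viewpoint is more conceptual. It shows that the projected intersection is the same for every admissible $T'$, up to multiples of the closed curves $c^k_\alpha$ with $[c^k_\alpha]\in L_\alpha$. That also explains the paper's base-point remark: $\sum_i r_i^k a_i^k$ itself changes by a multiple of $c^k_\alpha$ when the base point moves, and the paper's count of ``$r_i^k$ saddles'' tacitly assumes $\min_i r_i^k=0$. The paper's route buys an explicit surface with a visible intersection pattern. Its use of Lemma~\ref{lemma0}, adding pairs $(\alpha_i,-\alpha_i)$ to $c'_\alpha$, is exactly your normalization of the offset $w(a_1^k)$.

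\textbf{Two steps need repair.} First, Lemma~\ref{lemma0} cannot be applied with $c_2=c'_\beta$ and $c=c_\alpha$. Its hypothesis $c\cap c_2=\emptyset$ fails precisely because $c_\alpha$ meets $c'_\beta$, which is where the arcs $a_i^k$ come from, and pushing $c_\alpha$ off itself does not help. You do not need the lemma here. Taking $c_\alpha$ and $c'_\alpha$ as disjoint parallel copies already separates them, and $F$ exists simply because $[c'_\alpha]=[c'_\beta]$.

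Second, ``parallel sheets at distinct heights joined by vertical strips'' is not automatically embedded. A sheet ending on $c'_\beta$ must be topmost, since its strip rises to level $\tfrac13$, and a sheet ending on $c'_\alpha$ must be bottommost. A single sheet may have to continue as the bottom of the stack across one edge of its region and as the top across another, so constant heights fail in general.

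The cleanest fix is to drop the construction altogether. Take any properly embedded $T'$ with the prescribed boundary, transverse to $c_\alpha\times[0,\tfrac13]$; it exists because the boundary link is null-homologous in $\Sigma\times[0,\tfrac13]$. For $x\in c_\alpha\setminus c'_\beta$, the coefficient of the projected intersection at $x$ is, up to a universal sign, the algebraic intersection $d(x)$ of $T'$ with $\{x\}\times[0,\tfrac13]$. Now look at the compact $1$-manifold $T'\cap Q$ for rectangles $Q=[x,x']\times[0,\tfrac13]$. Since $\partial T'$ meets $c_\alpha\times[0,\tfrac13]$ only in $(c_\alpha\cap c'_\beta)\times\{\tfrac13\}$, $d$ is constant on each arc and jumps by $\pm1$ at each point of $c_\alpha\cap c'_\beta$. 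This gives $d(a_i^k)=d(a_1^k)\pm r_i^k$ without any explicit surface.

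The global sign is still left to your local model. With the outward-normal-first convention, $\partial F=c'_\alpha-c'_\beta$ forces the jump to be $-\varepsilon_i(c'_\beta,c^k_\alpha)$. So the two signs you flagged are linked: you get $+r_i^k$ exactly when the horizontal copies are oriented against $c_\alpha$. The paper is no more explicit about this sign than you are.
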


\begin{proof}[Proof]
We denote by $R$ the maximum of the coefficients $r_i^k$ associated with the arcs of $c_{\alpha}$
\[
R := \max_{\substack{k \in \{1, \ldots, N\} \\ i \in \{1, \ldots, 2n_k\}}} (r_i^k).
\]

Let $t_0 = \frac{1}{3} > t_1 > t_2 > \ldots > t_R > 0$ and let $\rho : c_\alpha \to \mathbb{N}$ be the function defined by 
\[
\rho(x) = \left\{\begin{array}{cl}
r_i^k & \mbox{ if }  x\in i\mbox{-th arc of  } c^k_\alpha \\
\max(r^k_{i-1},r^k_i) & \mbox{ if } x \in c_\alpha^k\cap c'_\beta \mbox{ between the arcs } i-1 \mbox{ and } i
\end{array}\right.
\]

\begin{figure}[h]
\includegraphics[width=0.8\textwidth]{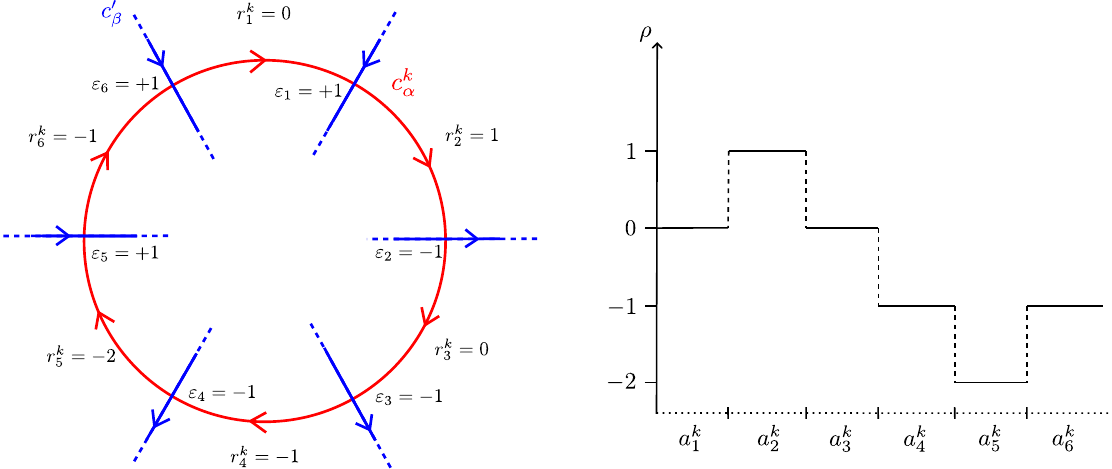}
\captionof{figure}{Function $\rho$.}
\label{fig:rho}
\end{figure}

The idea of the proof is the following. We will construct the surface $T'$ using $R$ families of saddles contained in the slices $\Sigma \times [t_j, t_{j+1}]$. Constructing a family of saddles corresponds to truncating the highest steps of the function $\rho$. The construction stops once the function $\rho$ has become constant. 

For this, we introduce $\sigma_t$ the multicurve corresponding to the intersection of $T'$ with the surface $\Sigma \times \{t\}$
\[
\sigma_t := T' \cap (\Sigma \times \{t\})
\]

In particular, we have $\sigma_{t_0} = c_{\beta}'$. When passing a saddle between $t_j$ and $t_{j+1}$, the curve $\sigma_t$ is locally modified as shown in Figure \ref{selle}. We say that we perform \textit{a surgery of $\sigma_t$ along the arc $\delta$}.

\vspace{0.2cm}

\begin{figure}[h]
\includegraphics[scale=0.25]{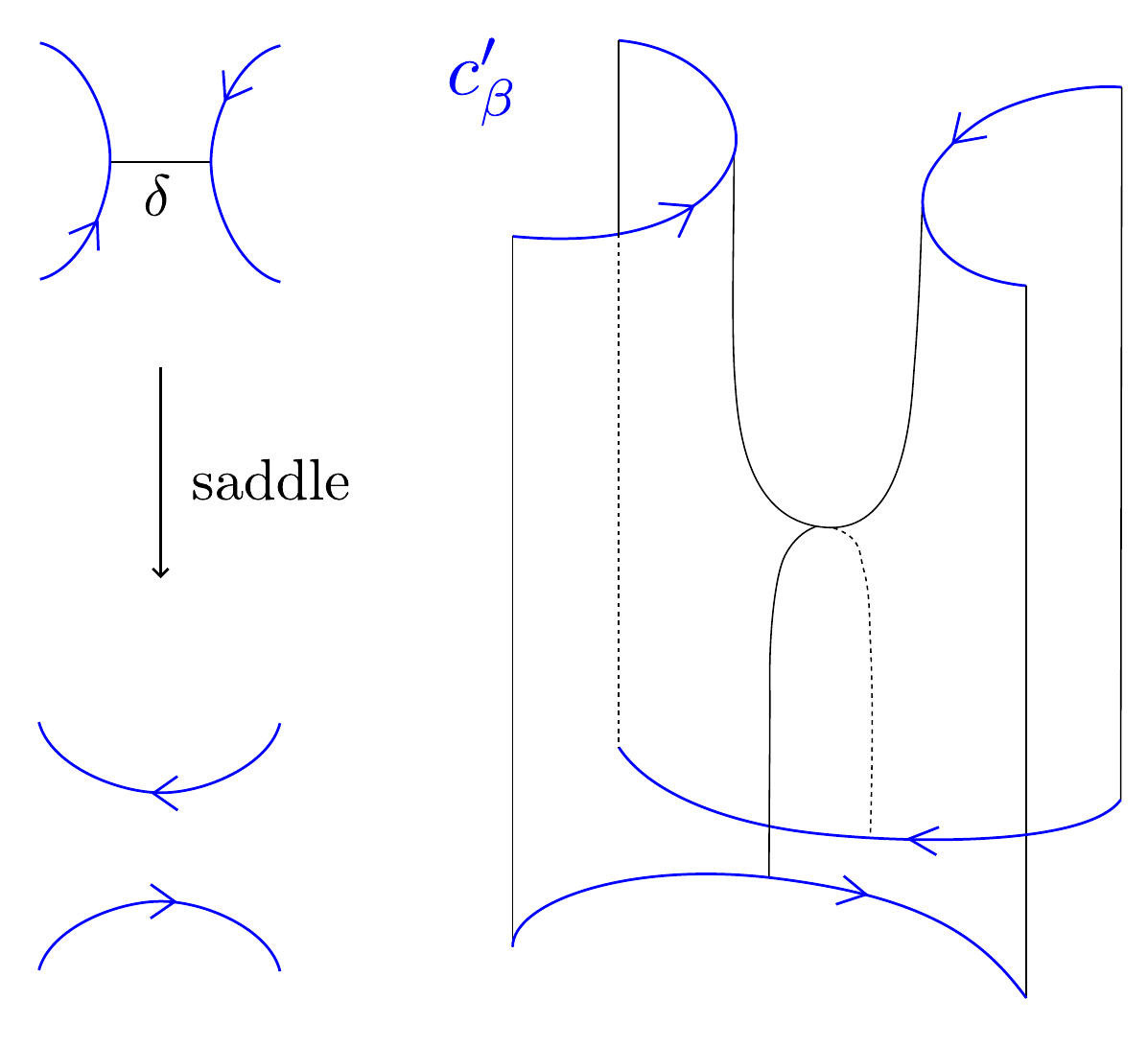}
\captionof{figure}{Modification of $c'_\beta$ by surgery along $\delta$.}
\label{selle}
\end{figure}

We begin by considering the part of $T'$ contained in the slice $\Sigma \times [t_0, t_1]$. We set $\rho_{t_0}:=\rho$ and $\mathcal{A}_{t_0}$ the set of connected components of $\{x \in c_{\alpha} \mid \rho_{t_0}(x) = R\}$. The set $\mathcal{A}_{t_0}$ consists of the closure of the arcs $a_i^k$ for which $r_i^k = R$. The first family of saddles in $\Sigma \times [t_0, t_1]$ is obtained by performing surgeries of the multicurve $\sigma_{t_0}$ along the arcs of $\mathcal{A}_{t_0}$. By definition, the indices $i$ for which $r_i^k$ is maximal satisfy $r_{i-1}^k = r_{i+1}^k = r_i^k - 1$, i.e., $\varepsilon_{i-1} = 1$ and $\varepsilon_i = -1$. The arcs $a_i^k$ thus connect two portions of $c_{\beta}'$ with opposite orientations, and the surgery along $a_i^k$ is well-defined. Moreover, these surgeries can be performed simultaneously because the arcs are all disjoint. After surgery, a new multicurve $\sigma_{t_1}$ is obtained. 

Then, we set 
\[
\rho_{t_1} := \rho_{t_0} - \chi_{\mathcal{A}_{t_0}}
\]
where $\chi_{\mathcal{A}_{t_0}}$ is the indicator function of the set ${\mathcal{A}_{t_0}}$. 

The function $\rho_{t_1}$ is still piecewise constant and satisfies $\max(\rho_{t_1}) = R-1$. Similarly, we denote by $\mathcal{A}_{t_1}$ the set of connected components of $\{x \in c_{\alpha} \mid \rho_{t_1}(x) = R-1\}$. This time, the arcs of $\mathcal{A}_{t_1}$ can be concatenations of several arcs $a_i^k$. We add new saddles in the slice $\Sigma \times [t_1, t_2]$ by performing surgeries of $\sigma_{t_1}$ along the arcs of $\mathcal{A}_{t_1}$, and we obtain a new multicurve $\sigma_{t_2}$.

In general, for $j \in \{1, \ldots, R\}$, once the first $j$ families of saddles are constructed in the slices \break  $\Sigma\times[t_0, t_1], \ldots, \Sigma\times[t_{j-1}, t_j]$, we set
\[
\rho_{t_j} := \rho_{t_{j-1}} - \chi_{\mathcal{A}_{t_{j-1}}}
\]
and
\[
\mathcal{A}_{t_j} := \text{set of connected components of } \{x \in c_{\alpha} \mid \rho_{t_j} = R-j\}.
\]

The part of $T'$ contained in the slice $\Sigma \times [t_j, t_{j+1}]$ is then obtained by surgery of the multicurve $\sigma_{t_j}$ along the arcs of $\mathcal{A}_{t_j}$. Figure \ref{example} illustrates the construction of the saddles in an example.

\begin{figure}[h]
\centering
\includegraphics[width=0.8\textwidth]{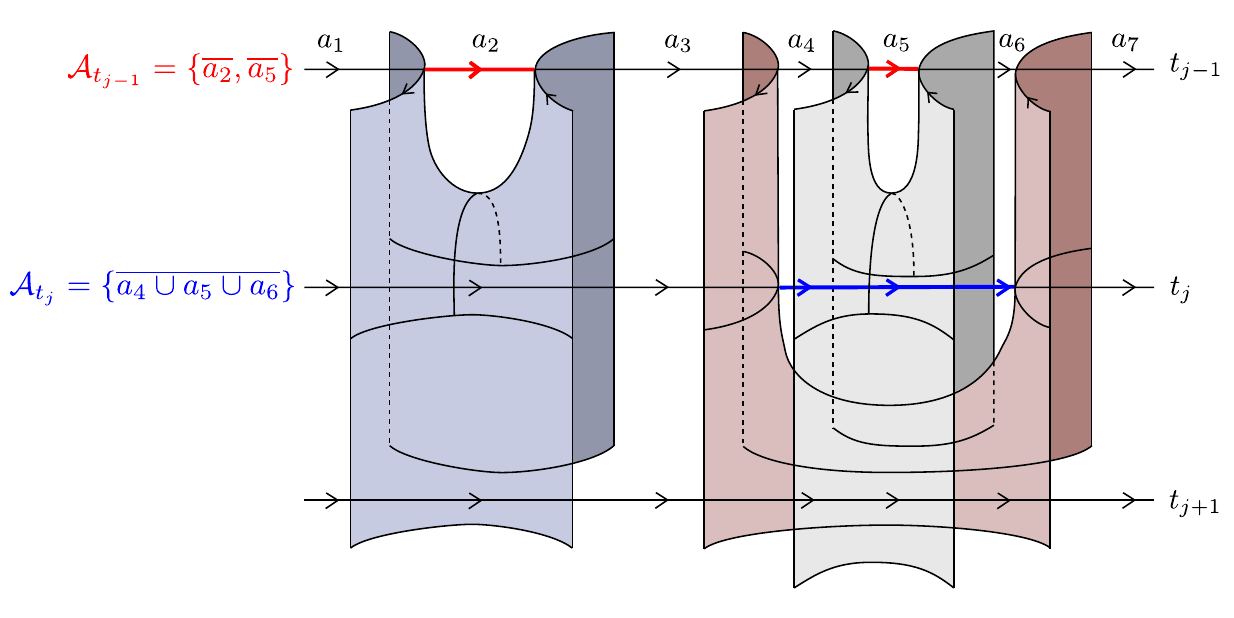}
\caption{Example of saddle construction between $t_{j-1}$ and $t_j$.}
\label{example}
\end{figure}

At the end of all the surgeries, we obtain a multicurve $\sigma_{t_R}$ on $\Sigma \times \{t_R\}$. We can verify that
\[
\sigma_{t_R} \cap c_{\alpha} = \emptyset
\]

To see this, consider a point $x \in c_{\alpha} \cap c_{\beta}'$. This point separates two arcs $a_{i-1}^k$ and $a_i^k$. The largest of the coefficients between $r_{i-1}^k$ and $r_i^k$ is written $R-j$ for some $j \in \{0, \ldots, R-1\}$. Then $x$ appears as the boundary of one of the arcs of $\mathcal{A}_{t_j}$, and we see that the multicurve $\sigma_{t_{j+1}}$ after surgery along $\mathcal{A}_{t_j}$ no longer intersects $c_{\alpha}$ at $x$. Since the surgeries do not create additional intersections with $c_{\alpha}$, we deduce that $\sigma_{t_R} \cap c_{\alpha} = \emptyset$.

Moreover, since the surgeries do not modify the homology class of the multicurve, we have:
\[
[\sigma_{t_R}] = [c_{\beta}'] = [c_{\alpha}']
\]

By Lemma \ref{lemma0} applied with $c_1 = c_{\alpha}', c_2 = \sigma_{t_R}$ and $c = c_{\alpha}$, it is possible to isotope $c_{\alpha}'$ and add pairs $(\alpha_i, -\alpha_i)$ so as to make it homologous to $\sigma_{t_R}$ in $\Sigma \setminus \bigsqcup_{i \in I} (\nu(\alpha_i))$, where $I$ is the family of indices of the curves $\alpha$ that appear in $c_{\alpha}$. Since the multicurve $c_{\alpha}'$ is not involved in the construction of the saddles of $T'$ and the calculation of the cup product only depends on its homology class, we can make these modifications to $c_{\alpha}'$. Then, there exists a surface $U' \subset \Sigma \times [0, t_R]$ satisfying
\[
\partial U' = \sigma_{t_R} - c_{\alpha}' \text{ and } U' \cap (c_{\alpha} \times [0, t_R]) = 0.
\]

We then complete the construction of $T'$ by adding $U'$. The surface $T'$ is thus composed of a "saddle" part in $\Sigma \times [t_R, \frac{1}{3}]$ and a "flat" part in $\Sigma \times [0, t_R]$.

We finally verify the statement of the lemma for the surface $T'$ we have constructed. Since $U'$ does not intersect $c_{\alpha} \times [0, t_R]$, it suffices to look at the intersection with the saddles. The saddles intersect the surface $c_{\alpha} \times [0, t_R]$ transversely in disjoint arcs connecting two points of $c_{\alpha} \cap c_{\beta}'$ (see Figure \ref{double_selle}).

\begin{figure}[h]
\includegraphics[width=0.7\textwidth]{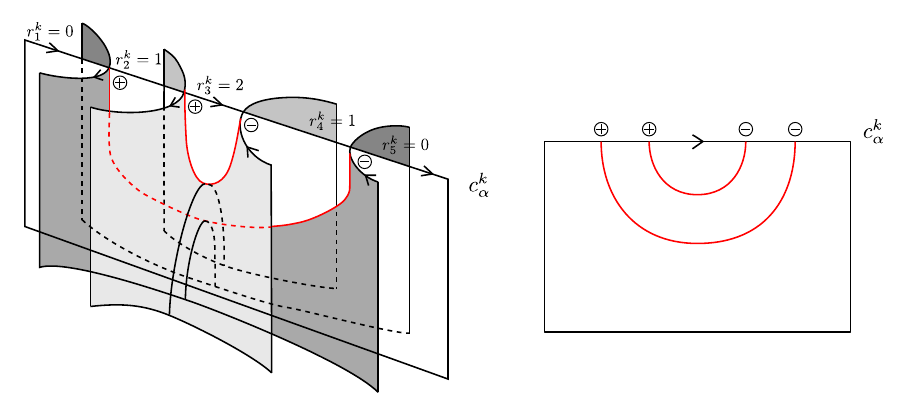}
\captionof{figure}{Intersection of $c_\alpha\times[0,t_R]$ with two nested saddles.}
\label{double_selle}
\end{figure}

Let $a_i^k \subset c_{\alpha} \times \{\frac{1}{3}\}$. There exists $j \in \{0, \ldots, R\}$ such that $r_i^k = R-j$. Then $a_i^k \subset \mathcal{A}_{t_p}$ for all $p \in \{j, \ldots, R\}$. The arc $a_i^k$ is thus above $r_i^k$ saddles of $T'$ (we do not perform surgeries along the arcs of $\mathcal{A}_{t_R}$). Once projected onto $c_{\alpha} \times \{\frac{1}{3}\}$, the intersection $(c_{\alpha} \times [0, \frac{1}{3}]) \cap T'$ thus corresponds to the 1-chain $\sum_{k=1}^N \sum_{i=1}^{n_k} r_i^k a_i^k$.
\end{proof}

\begin{rqe}
Changing the base point of the numbering of the arcs of $c_\alpha$ results in a vertical shift of the graph of $\rho$. This doesn't modify the way saddles are constructed in the proof of \ref{lemma1}. Thus, the intersection $(c_\alpha \times [0,\tfrac{1}{3}])\cap T'$ is independent of this choice. 
\end{rqe}

We have an analogous lemma for the multicurve $c_{\beta}'$ and the surface $T$:

\begin{lemma}
\label{lemma2}
The intersection $(c_{\beta}' \times [\frac{2}{3}, 1]) \cap T$ is represented by the 1-chain $\sum_{k=1}^M \sum_{i=1}^{2m_k} s_i^k b_i^k$.
\end{lemma}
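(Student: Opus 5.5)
The plan is to mirror the proof of Lemma~\ref{lemma1}, exchanging the roles of the two handlebodies and of the multicurves $c_\alpha$, $c'_\beta$. Recall that $T$ lies in $\Sigma\times[\frac{2}{3},1]$ with $\partial T = c_\alpha - c_\beta$. We must build $T$ so that its intersection with the vertical annuli $c'_\beta\times[\frac{2}{3},1]$ is controlled. We now start from the slice $\Sigma\times\{\frac{2}{3}\}$, where $T\cap(\Sigma\times\{\frac23\}) = c_\alpha$, and move upward, that is, toward $H_\beta$. We define $S':=\max_{k,i} s_i^k$ and choose levels $\frac{2}{3}=u_0<u_1<\dots<u_{S'}<1$. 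On $c'_\beta$ we define the step function $\rho'$ by $\rho'=s_i^k$ on the $i$-th arc of $c'^k_\beta$, and $\rho' = \max(s^k_{i-1},s^k_i)$ at the points of $c_\alpha\cap c'^k_\beta$.

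The first step is the inductive saddle construction. We let $\mathcal{B}_{u_j}$ be the set of components of $\{\rho'_{u_j}=S'-j\}$ and set $\rho'_{u_{j+1}}=\rho'_{u_j}-\chi_{\mathcal{B}_{u_j}}$. In the slice $\Sigma\times[u_j,u_{j+1}]$ we perform simultaneous surgeries of $\tau_{u_j}:=T\cap(\Sigma\times\{u_j\})$ along the arcs of $\mathcal{B}_{u_j}$. As in Lemma~\ref{lemma1}, these surgeries are well defined: at a local maximum of $\rho'_{u_j}$ the two adjacent signs are $+1$ followed by $-1$. The endpoints therefore lie on strands of $c_\alpha$ (or of the current $\tau$) with opposite orientations, and the arcs are disjoint. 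Each surgery removes two intersection points with $c'_\beta$ and creates none. Hence $\tau_{u_{S'}}\cap c'_\beta=\emptyset$, and $[\tau_{u_{S'}}]=[c_\alpha]=[c_\beta]$.

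The second step is to cap off with a flat part. We apply Lemma~\ref{lemma0} with the roles of $\alpha$ and $\beta$ swapped: $c_1=c_\beta$, $c_2=\tau_{u_{S'}}$, and $c=c'_\beta$. The lemma is symmetric in the two families, and its proof goes through verbatim. After isotoping $c_\beta$ and adding pairs $(\beta_i,-\beta_i)$, which does not change the class $x$, $c_\beta$ becomes homologous to $\tau_{u_{S'}}$ in $\Sigma\setminus\bigsqcup\nu(\beta_i)$, the union being over the $\beta_i$ appearing in $c'_\beta$. This yields a surface $U\subset\Sigma\times[u_{S'},1]$ with $\partial U$ equal to $c_\beta-\tau_{u_{S'}}$ up to orientation convention, and with $U$ disjoint from $c'_\beta\times[u_{S'},1]$. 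We then set $T$ to be the union of the saddles and $U$.

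Finally, we read off the intersection. $U$ contributes nothing. Each saddle meets $c'_\beta\times[\frac23,1]$ transversely in an arc lying over an arc of $\mathcal{B}_{u_j}$, and the arc $b_i^k$ lies in $\mathcal{B}_{u_p}$ for exactly $s_i^k$ indices $p$. Projecting onto $c'_\beta$ therefore gives the 1-chain $\sum_k\sum_i s_i^k b_i^k$.

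The main obstacle is orientation and sign bookkeeping. We must check that, with the convention for orienting $S'\cap S$, the saddle arcs project with coefficient $+1$ times the orientation of $c'_\beta$, rather than $-1$. The reversal of the vertical direction and the boundary convention $\partial T=c_\alpha-c_\beta$ (versus $\partial T'=c'_\alpha-c'_\beta$) might introduce a sign. I would first verify this on the local model of a single saddle, comparing with the corresponding computation implicit in Lemma~\ref{lemma1}. I expect the two sign flips to cancel. I would also note that changing the base point only shifts $\rho'$ vertically, so the result is independent of that choice.
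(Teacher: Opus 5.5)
Your proposal is essentially the paper's argument. The paper gives no separate proof of Lemma~\ref{lemma2}; it only calls it analogous to Lemma~\ref{lemma1}. Your mirrored construction is exactly that analogue: saddles along arcs of $c'_\beta$ built level by level upward from $\Sigma\times\{\frac23\}$, followed by the $\beta$-version of Lemma~\ref{lemma0} applied to $c_\beta$ to add a flat part avoiding $c'_\beta\times[\frac23,1]$.

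The sign you leave open is in fact forced, because $S'\cap S$ is a closed oriented $1$-manifold. Through the vertical segments $(c_\alpha\cap c'_\beta)\times[\frac13,\frac23]$, the projected chain on $c'_\beta$ must have boundary $-\partial\bigl(\sum_{k,i} r_i^k a_i^k\bigr)=\partial\bigl(\sum_{k,i} s_i^k b_i^k\bigr)$. So it agrees with $\sum_{k,i} s_i^k b_i^k$ up to multiples of the components $c'^k_\beta$, and this is harmless since $[c'^k_\beta]\in L_\beta$. The same slack affects your count of ``exactly $s_i^k$ saddles'', which tacitly needs base points chosen so that $\min_i s_i^k=0$ on each component.
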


We note that by construction we have:
\[
\partial \biggl( \sum_{k,i} r_i^k a_i^k \biggr) = c_{\alpha} \cap c_{\beta}' = -\partial \biggl( \sum_{k,i} s_i^k b_i^k \biggr)
\]

The description of the intersection $S \cap S'$ in (\ref{intersection_transverse}) with Lemmas \ref{lemma1} and \ref{lemma2} prove Theorem \ref{1-cycle}.

\begin{proof}[Proof of Corollary \ref{triple_cup_product}]

Let $S''$ be the surface in $M = H_{\alpha} \cup (\Sigma \times [0,1]) \cup H_{\beta}$ defined by
\[
S'' = D_{\alpha}'' \cup (c_{\alpha}'' \times [0, \frac{1}{2}]) \cup T'' \cup D_{\beta}''
\]
where $D_{\alpha}'', D_{\beta}''$ are disjoint unions of meridian disks in $H_{\alpha}, H_{\beta}$ and the surface $T''$ is included in $\Sigma \times [\frac{1}{2}, 1]$. The cup product $x\smile x'$ is represented by the $1$-cycle $\varphi(c_{\alpha}, c_{\beta}')$. We can chose to represent it on the surface $\Sigma \times \{\frac{1}{4}\}$ (see Figure \ref{S''}).
\begin{figure}[]
\includegraphics[width=0.41\textwidth]{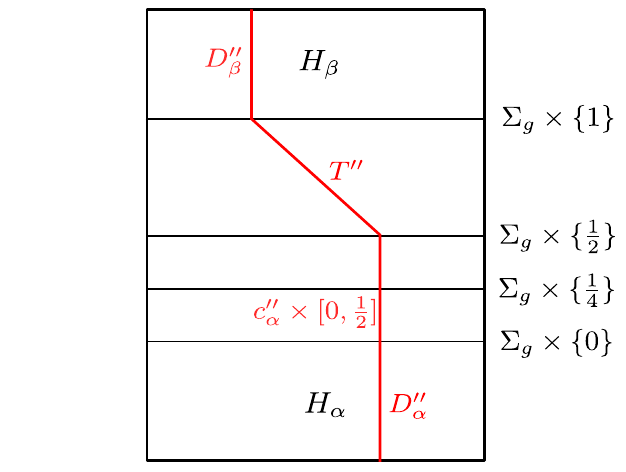}
\captionof{figure}{Decomposition of the surface $S''$ in $M$.}
\label{S''}
\end{figure}
Then, we see that the triple-cup product $\mu(x, x', x'')$ is given by the intersection of $\varphi(c_{\alpha}, c_{\beta}')$ with $c_{\alpha}''$ in the surface $\Sigma$.

\end{proof}

\section{Triple-cup product form and Turaev's homotopy intersection form}
\label{Turaev}

The main result of this section is determining the double cup product form of a $3$-manifold $M=~H_\alpha \cup_\Sigma H_\beta$ in terms of the homotopy intersection form of the Heegaard surface. The proof uses the expression of the double cup product map $\varphi$ of $M$ given by Theorem $\ref{1-cycle}$.

We first review the definition and some useful properties of the homotopy intersection form introduced by Turaev in \cite{Turaev79} (see also \cite{Papa75} and \cite{Perron06}). We refer to \cite{MassuyeauTuraev13} and \cite{MassuyeauTuraev14} for more details. We will use these results to recover some classical properties of the triple-cup product form, such that skew-symmetry and invariance under certain twists along surfaces. 

\subsection{Turaev's homotopy intersection form}

Let $S$ be a compact oriented surface with one boundary component and fix a base point $*\in \partial S$. Let $\nu$ be the oriented boundary component of $S$ containing the base point $*$. Let $\bullet$ and $\blacktriangle$ be two other points on $\nu$, "slightly before" and "slightly after" $*$ (see Figure \ref{fig:loops}). For an oriented path $\alpha\in S$ and $p,q$ two distinct points on $\alpha$, we denote by $\alpha_{pq}$ the path running along $\alpha$ from $p$ to $q$ in the positive direction. We denote by $\overline{\alpha}_{qp}$ the same path with opposite orientation. We set $\pi := \pi_1(S,*)$. 

\begin{defn}[Homotopy intersection form]
The homotopy intersection form is the $\Z$-bilinear map $\eta :\Z[\pi]\times\Z[\pi]\to\Z[\pi]$ defined, for any $a,b\in\pi$, by
\[\eta(a,b)  :=  \mathlarger{\sum}_{p\in\alpha\cap\beta} \varepsilon_p(\alpha,\beta) \, \overline{\nu}_{*\bullet}\alpha_{\bullet p}\beta_{p\blacktriangle}\overline{\nu}_{\blacktriangle*}\]
where $\alpha$ is a loop based at $\bullet$ such that $\overline{\nu}_{*\bullet}\alpha\nu_{\bullet*}$ represents $a$, $\beta$ is a loop based at $\blacktriangle$, transverse to $\alpha$, such that $\nu_{*\blacktriangle}\beta\overline{\nu}_{\blacktriangle*}$ represents $b$, and the sign $\varepsilon_p(\alpha,\beta)=\pm 1$ is equal to $+1$ if and only if $(T_p\alpha,T_p\beta)$ forms a positive basis of $T_pS$. 

\vspace{0.2cm}

\begin{center}
\includegraphics[width=0.4\textwidth]{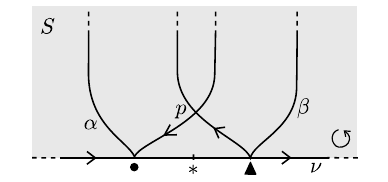}
\captionof{figure}{}
\label{fig:loops}
\end{center}
\end{defn}

Denoting by $\varepsilon : \Z[\pi]\to\Z$ the augmentation map which sends an element $\sum\lambda_i\gamma_i\in\Z[\pi]$ to the sum of its coefficients $\sum\lambda_i\in\Z$, we define the augmentation ideal $I:=\ker(\varepsilon)$. The ideal $I$ is generated as an abelian group by elements of the form $x-1$ for $x\in \pi$.

\begin{lemma}[{\hspace{-0.5px}\cite[\S 7.2]{MassuyeauTuraev13}}]
\label{Fox}
The intersection form $\eta$ is a Fox pairing, in the sense that it satisfies 
\begin{align}
\eta(xx',y) =  x\,\eta(x',y) + \varepsilon(x')\,\eta(x,y)\label{eq:Fox1} \\
\eta(x,yy') = \eta(x,y)\,y' + \varepsilon(y)\,\eta(x,y') \label{eq:Fox2}
\end{align}
for all $x,x',y,y' \in \Z[\pi]$.
\end{lemma}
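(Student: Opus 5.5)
The plan is to reduce both identities to the case where all arguments are group elements, and then to check them geometrically by concatenating loops. We take for granted, as implicit in the definition, that $\eta(a,b)$ does not depend on the choice of transverse representatives $\alpha,\beta$.

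\textbf{Reduction to group elements.} Both sides of (\ref{eq:Fox1}) are $\Z$-linear in each of $x$, $x'$, $y$ separately. Indeed, $x\,\eta(x',y)$ is bilinear in $(x,x')$ and linear in $y$, and so is $\varepsilon(x')\,\eta(x,y)$. The same holds for (\ref{eq:Fox2}). It therefore suffices to treat $x,x',y,y'\in\pi$. In that case $\varepsilon(x')=\varepsilon(y)=1$, and the identities to prove become
\[
\eta(xx',y)=x\,\eta(x',y)+\eta(x,y), \qquad \eta(x,yy')=\eta(x,y)\,y'+\eta(x,y').
\]

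\textbf{Proof of (\ref{eq:Fox1}).} Choose a loop $\alpha$ based at $\bullet$ representing $x$ and a loop $\alpha'$ based at $\bullet$ representing $x'$, in the sense of the definition. Both are taken transverse to a fixed loop $\beta$ based at $\blacktriangle$ that represents $y$. Perturb them near $\bullet$ so that the concatenation $\alpha\alpha'$ is an immersed loop based at $\bullet$ representing $xx'$ and transverse to $\beta$. A small collar of $\nu$ near $\bullet$ is free of $\beta$ except for the short arc of $\beta$ leaving $\blacktriangle$, which lies on the other side of $*$. Hence this perturbation creates no new intersection points. Then $(\alpha\alpha')\cap\beta=(\alpha\cap\beta)\sqcup(\alpha'\cap\beta)$, with the same signs, and we split the defining sum accordingly.

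For $p\in\alpha\cap\beta$, the path $(\alpha\alpha')_{\bullet p}$ equals $\alpha_{\bullet p}$. These terms therefore give exactly $\eta(x,y)$.

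For $p\in\alpha'\cap\beta$, the path $(\alpha\alpha')_{\bullet p}$ equals $\alpha\,\alpha'_{\bullet p}$. Inserting $\nu_{\bullet *}\overline{\nu}_{*\bullet}$, which is trivial, we get
\[
\overline{\nu}_{*\bullet}\,\alpha\,\alpha'_{\bullet p}\,\beta_{p\blacktriangle}\,\overline{\nu}_{\blacktriangle *}
=\bigl(\overline{\nu}_{*\bullet}\,\alpha\,\nu_{\bullet *}\bigr)\bigl(\overline{\nu}_{*\bullet}\,\alpha'_{\bullet p}\,\beta_{p\blacktriangle}\,\overline{\nu}_{\blacktriangle *}\bigr).
\]
The first factor is $x$, so these terms sum to $x\,\eta(x',y)$.

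\textbf{Proof of (\ref{eq:Fox2}).} The argument is symmetric. Take $\beta$ and $\beta'$ based at $\blacktriangle$ representing $y$ and $y'$, and form the concatenation $\beta\beta'$. Split $\alpha\cap(\beta\beta')=(\alpha\cap\beta)\sqcup(\alpha\cap\beta')$.

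For $p\in\alpha\cap\beta$, the tail $(\beta\beta')_{p\blacktriangle}$ equals $\beta_{p\blacktriangle}\beta'$. Writing
\[
\beta'\,\overline{\nu}_{\blacktriangle *}=\overline{\nu}_{\blacktriangle *}\bigl(\nu_{*\blacktriangle}\,\beta'\,\overline{\nu}_{\blacktriangle *}\bigr)
\]
shows that these terms give $\eta(x,y)\,y'$.

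For $p\in\alpha\cap\beta'$, the tail is just $\beta'_{p\blacktriangle}$, and these terms give $\eta(x,y')$.

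\textbf{Main obstacle.} The delicate point is the general-position bookkeeping at the base points. We must check that smoothing the concatenation at $\bullet$ (respectively $\blacktriangle$) creates no spurious crossings with the other loop. We must also check that the boundary arcs $\nu_{\bullet *}$ and $\nu_{*\blacktriangle}$ interact with the paths as claimed. The second check is exactly why $\bullet$ and $\blacktriangle$ are placed on opposite sides of $*$.

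I would handle both checks by working in a half-disk neighbourhood of $*$ in which $\alpha$-type loops enter and exit only at $\bullet$ and $\beta$-type loops only at $\blacktriangle$. In such a neighbourhood the concatenations can be made disjoint from the other family.
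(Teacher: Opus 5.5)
Your proof is correct. The paper does not prove this lemma; it quotes it from \cite[\S 7.2]{MassuyeauTuraev13}. Your argument is therefore a self-contained replacement for that citation, along the standard lines:
reduce by $\Z$-multilinearity to $x,x',y,y'\in\pi$;
represent $xx'$ (resp.\ $yy'$) by the concatenation $\alpha\alpha'$ at $\bullet$ (resp.\ $\beta\beta'$ at $\blacktriangle$);
split the intersection set into its two pieces;
rewrite the summands of one piece using $\nu_{\bullet *}\overline{\nu}_{*\bullet}\simeq 1$ (resp.\ $\overline{\nu}_{\blacktriangle *}\nu_{*\blacktriangle}\simeq 1$).
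The only outside input is that $\eta$ does not depend on the chosen representatives, which you rightly treat as part of the definition.

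One remark in your last paragraph is inaccurate, though it does not affect the proof. The two path identities above are pure path algebra and hold wherever $\bullet$ and $\blacktriangle$ sit on $\nu$. The only geometric fact your argument uses is $\bullet\neq\blacktriangle$, so that a small half-disk at $\bullet$ misses $\beta$ and one at $\blacktriangle$ misses $\alpha$. Placing $\bullet$ before $*$ and $\blacktriangle$ after it is not what makes the Fox identities hold; it only fixes which Fox pairing $\eta$ is. Sliding the base point of the first loop past $\blacktriangle$ creates four crossings with the two germs of $\beta$ at $\blacktriangle$. This changes $\eta(a,b)$ by a term $\pm(a-1)(b-1)$, which is itself a Fox pairing, and is precisely the correction term in \eqref{eq:eta_skew_sym}.
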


These equalities imply the following lemma: 

\begin{lemma}[{\hspace{-0.5px}\cite[\S 7.2]{MassuyeauTuraev13}}]
\label{lemme_ideal}
For $k,l\in \mathbb{N}^*$, we have $\eta(I^k,I^l)\subset I^{k+l-2}$. 
\end{lemma}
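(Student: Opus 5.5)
The plan is to prove the statement by induction on $k+l$, reducing everything to the Fox identities of Lemma~\ref{Fox}. The ideal $I^k$ is additively generated by products $(x_1-1)\cdots(x_k-1)$ with $x_i\in\pi$, and $\eta$ is bilinear. So it suffices to treat $\eta(u,v)$ with $u=(x_1-1)u'$, $u'\in I^{k-1}$, and $v=v'(y-1)$, $v'\in I^{l-1}$. For $k=1$ or $l=1$ the exponent $k+l-2$ is at most $l-1$ or $k-1$. Here we set $I^0=\Z[\pi]$, and also handle the degenerate case $k=l=1$, where the bound $I^0=\Z[\pi]$ holds trivially.

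First I would extract the one-sided consequences of the Fox identities. Take $x\in\Z[\pi]$ and $x'\in I$, so that $\varepsilon(x')=0$. Then \eqref{eq:Fox1} gives $\eta(xx',y)=x\,\eta(x',y)$. Symmetrically, for $y\in I$, \eqref{eq:Fox2} gives $\eta(x,yy')=\eta(x,y)\,y'$.

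Now write $u\in I^k$ with $k\ge 2$ as a sum of terms $a\,u'$ with $a\in I$ and $u'\in I^{k-1}\subset I$. Since $\varepsilon(u')=0$, we get $\eta(au',v)=a\,\eta(u',v)$. By induction, $\eta(u',v)\in I^{k-1+l-2}$, so $a\,\eta(u',v)\in I\cdot I^{k+l-3}=I^{k+l-2}$. For $l\ge 2$ the same argument runs on the right: write $v=v''b$ with $v''\in I^{l-1}$ and $b\in I$. Then \eqref{eq:Fox2}, applied to the product $v''b$ with $\varepsilon(v'')=0$, gives $\eta(u,v''b)=\eta(u,v'')\,b$, and we conclude using that $I^{\bullet}$ is a two-sided ideal.

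For this to work I must order the factors so that the Fox identity kills the correct augmentation term. In \eqref{eq:Fox1} the term that vanishes is $\varepsilon(x')\eta(x,y)$, so the factor in $I$ must be the right-hand one, namely $u'$. This is consistent with writing $I^k=I\cdot I^{k-1}$. In \eqref{eq:Fox2} we need $\varepsilon(y)=0$ for the left factor, so $v=v''b$ with $v''\in I^{l-1}$.

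The base case $k=l=1$ is trivial. I would then induct on $k$ with $l=1$, then on $l$ for fixed $k$. The exponent bookkeeping is that each step peels off one factor of $I$ and contributes exactly one factor to the output. The only subtle point is that when $k=1$ we need $\eta(I,I^l)\subset I^{l-1}$. This follows from the right-hand step alone, starting from $\eta(I,I)\subset I^0$.
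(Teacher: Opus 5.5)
Your proof is correct and matches the paper's approach: the paper gives no separate proof, it cites Massuyeau--Turaev and notes that the lemma follows from the Fox identities \eqref{eq:Fox1}--\eqref{eq:Fox2}, which is exactly what your induction does, using the vanishing augmentation of the inner factor ($u'$, resp.\ $v''$) to kill the extra term. Unwinding your induction gives the closed form $\eta(a_1\cdots a_k,\,b_1\cdots b_l)=a_1\cdots a_{k-1}\,\eta(a_k,b_1)\,b_2\cdots b_l\in I^{k-1}\,\Z[\pi]\,I^{l-1}$ for $a_i,b_j\in I$, which is the standard one-line form of the same argument.
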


Because $\pi$ is a free group, we have the following result due to Magnus \cite{Magnus}:
\begin{lemma}
\label{lemma:Magnus}
For all $k \geq 1, \,\,x-1 \in I^k \Leftrightarrow x\in \Gamma_k\pi$.
\end{lemma}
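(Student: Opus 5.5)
We prove the two implications separately. The direction ``$x\in\Gamma_k\pi\Rightarrow x-1\in I^k$'' is elementary and holds for any group. The converse is where freeness of $\pi$ is used, through the Magnus expansion.

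\emph{Easy direction.} We argue by induction on $k$; the case $k=1$ is trivial since $x-1\in I$ for every $x\in\pi$. First, the set $G_k:=\{x\in\pi \mid x-1\in I^k\}$ is a subgroup of $\pi$, because of the identities
\[
xy-1=(x-1)(y-1)+(x-1)+(y-1), \qquad x^{-1}-1=-x^{-1}(x-1).
\]
Second, if $x-1\in I^a$ and $y-1\in I^b$, then
\[
[x,y]-1=x^{-1}y^{-1}\bigl((x-1)(y-1)-(y-1)(x-1)\bigr)\in I^{a+b}.
\]
Since $\Gamma_k\pi$ is generated by the commutators $[x,y]$ with $x\in\Gamma_{k-1}\pi$ and $y\in\pi$, the induction hypothesis together with $a=k-1$, $b=1$ gives $\Gamma_k\pi\subset G_k$.

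\emph{Hard direction.} Fix a free basis $\gamma_1,\dots,\gamma_n$ of $\pi$. Let $\theta:\Z[\pi]\to\Z\langle\langle X_1,\dots,X_n\rangle\rangle$ be the ring homomorphism determined by $\gamma_i\mapsto 1+X_i$; it is well defined because $1+X_i$ is invertible in the power series ring. Let $\hat I$ denote the ideal of series with zero constant term. Since $\theta(x-1)\in\hat I$ for every $x\in\pi$, we get $\theta(I)\subset\hat I$, hence $\theta(I^k)\subset\hat I^k$. The ideal $\hat I^k$ consists of the series with no terms of degree $<k$. It therefore suffices to prove Magnus' theorem: if $x\in\pi$ and $\theta(x)-1$ has no terms of degree $<k$, then $x\in\Gamma_k\pi$.

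To prove this, suppose $x\in\Gamma_j\pi\setminus\Gamma_{j+1}\pi$ with $j<k$. By the easy direction, $\theta(x)-1$ starts in degree $\geq j$. The degree-$j$ part of $\theta(x)-1$ defines a map
\[
\Gamma_j\pi/\Gamma_{j+1}\pi\longrightarrow T_j,
\]
where $T_j$ is the degree-$j$ component of the tensor algebra on $X_1,\dots,X_n$. This map is additive, by the product identity above. By the commutator identity, it sends the class of $[u,v]$ to the commutator $[\ell(u),\ell(v)]$ of the leading terms. Hence it is the degree-$j$ piece of a Lie algebra map
\[
\bigoplus_j\Gamma_j\pi/\Gamma_{j+1}\pi\longrightarrow T(X_1,\dots,X_n),
\]
which sends $\bar\gamma_i\mapsto X_i$.

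The main obstacle is to show that this map is injective. I would use the Magnus--Witt theorem: $\bigoplus_j\Gamma_j\pi/\Gamma_{j+1}\pi$ is the free Lie ring on $\bar\gamma_1,\dots,\bar\gamma_n$. Any Lie ring generated by the $\bar\gamma_i$ is a quotient of the free Lie ring, so the real content is that this quotient map is an isomorphism; this can be checked using a Hall basis of basic commutators. Granting this, the free Lie ring embeds into the tensor algebra via the Poincaré--Birkhoff--Witt theorem (or via the Lie-element characterization, which holds over $\Z$ because the free Lie ring is a free abelian group with a Hall basis). So the class of $x$, being nonzero, has a nonzero degree-$j$ image. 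Thus $\theta(x)-1$ has a nonzero term of degree $j<k$, which contradicts $x-1\in I^k$. In the write-up I would cite Magnus (and Magnus--Karrass--Solitar) for this embedding rather than reprove the Hall basis theory.
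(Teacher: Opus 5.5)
Your argument is correct. It is a genuinely different route only in that the paper gives no proof at all: it invokes Magnus's theorem for free groups through the citation \cite{Magnus}. What you wrote is essentially the standard proof behind that citation: the Magnus expansion $\theta$, the leading-term Lie map from $\bigoplus_j\Gamma_j\pi/\Gamma_{j+1}\pi$ to the tensor algebra $T$, and its injectivity. The citation buys brevity. Your version makes explicit where freeness enters, namely the existence of $\theta$ and the injectivity of the leading-term map. It also shows that the inclusion $\Gamma_k\pi\subset\{x \mid x-1\in I^k\}$ holds for any group.

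You can shorten the hard direction and avoid a slightly backwards dependence: you do not need the Magnus--Witt theorem as a separate input. The graded Lie ring $\bigoplus_j\Gamma_j\pi/\Gamma_{j+1}\pi$ is generated by the classes $\bar\gamma_i$, so the free Lie ring $L$ surjects onto it. The composite of this surjection with your leading-term map is the canonical Lie map $L\to T$, $X_i\mapsto X_i$. That canonical map is injective over $\Z$ (Hall basis plus PBW). So if $\xi$ has zero leading-term image, any lift of $\xi$ to $L$ maps to $0$ in $T$, hence is $0$, hence $\xi=0$. This gives the injectivity you need directly, and Magnus--Witt drops out as a corollary. In standard treatments, Magnus--Witt (and the independence of basic commutators in $\Gamma_j\pi/\Gamma_{j+1}\pi$) is deduced from exactly this injectivity. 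Quoting it as an independent ingredient, \emph{checked via a Hall basis}, is therefore unnecessary and looks circular, even though citing it is not logically wrong.

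A side remark: every use of this lemma in the paper only needs the implication $x\in\Gamma_k\pi\Rightarrow x-1\in I^k$. This covers the well-definedness of $\overline{\eta}$, the skew-symmetry, and the invariance under the Johnson kernel. Your first paragraph proves that implication for an arbitrary group, so freeness of $\pi$ is needed only for the converse, which the paper never uses.
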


Let $M$ be a closed, connected, oriented $3$-manifold with a Heegaard splitting $M=H_\alpha\cup_\Sigma (-H_\beta)$. From now, we work with $S=\Sigma\setminus \mathrm{int}(D)$, where $D$ is a small disk removed from $\Sigma$ to create a boundary component. 
Our goal is to recover the $1$-cycle defined by $\varphi$ from the intersection form $\eta$. We first express $L_\alpha \cap L_\beta$ in terms of quotients of $\pi=\pi_1(S,*)$. Recall that $L_\alpha$ and $L_\beta$ are defined as the kernels of the maps induced in homology by the inclusions of the surface $\Sigma$ into the handlebodies $H_\alpha$ and $H_\beta$: 
\[
L_\alpha := \ker\bigl(H_1(\Sigma)\to H_1(H_\alpha)\bigr)
\quad \text{and} \quad
L_\beta := \ker\bigl(H_1(\Sigma)\to H_1(H_\beta)\bigr).
\]
We introduce their homotopical counterparts
\[
A := \ker(\pi\to\pi_1(H_\alpha))
\quad \text{and} \quad
B := \ker(\pi\to\pi_1(H_\beta)).
\]

\begin{lemma}
We have isomorphisms \[L_\alpha \cap L_\beta \cong \frac{A\cap B[\pi,\pi]}{A\cap[\pi,\pi]}
\cong \frac{B\cap A[\pi,\pi]}{B\cap[\pi,\pi]}.\]
\end{lemma}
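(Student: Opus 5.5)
The plan is to use the abelianization map $\mathrm{ab}:\pi\to H_1(S)$, composed with the isomorphism $H_1(S)\cong H_1(\Sigma)$. This is an isomorphism because removing a disk from a closed surface does not change $H_1$: the boundary curve $\partial D$ is null-homologous in $S$. Restricting $\mathrm{ab}$ to $A\cap B[\pi,\pi]$ should give a surjection onto $L_\alpha\cap L_\beta$ with kernel $A\cap[\pi,\pi]$. The second isomorphism then follows by the same argument with the roles of $\alpha$ and $\beta$ exchanged, which gives the denominator $B\cap[\pi,\pi]$.

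The first step is to show $\mathrm{ab}(A)=L_\alpha$, and likewise $\mathrm{ab}(B)=L_\beta$. The inclusion $\mathrm{ab}(A)\subset L_\alpha$ is immediate from naturality: an element of $A$ dies in $\pi_1(H_\alpha)$, so its image dies in $H_1(H_\alpha)$. For the reverse inclusion, I will use van Kampen for $H_\alpha$, viewed as $S$ with $2$-handles attached along the $\alpha_i$ and a $3$-ball (the disk $D$ bounds a disk in the handlebody). This shows that $A$ is the normal closure in $\pi$ of the loops $\gamma_i\alpha_i\gamma_i^{-1}$, where the $\gamma_i$ are arcs from $*$. Their abelianizations are the classes $[\alpha_i]$, which generate $L_\alpha$, so $\mathrm{ab}(A)=L_\alpha$. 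Since $\mathrm{ab}(B[\pi,\pi])=\mathrm{ab}(B)$, we also get $\mathrm{ab}(B[\pi,\pi])=L_\beta$. This van Kampen identification of $A$ and $B$ is the one place where topology enters. I expect it to be the main point needing care, mostly in handling basepoints and the extra boundary circle; the rest is group theory.

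For the image, take $g\in A\cap B[\pi,\pi]$. Then $\mathrm{ab}(g)\in L_\alpha\cap L_\beta$, so the restricted map lands in the right place. For surjectivity, take $h\in L_\alpha\cap L_\beta$ and use the previous step to choose $a\in A$ with $\mathrm{ab}(a)=h$. Since $h\in L_\beta=\mathrm{ab}(B)$, there is $b\in B$ with $\mathrm{ab}(b)=h$. Then $b^{-1}a\in[\pi,\pi]$, so $a\in B[\pi,\pi]$, and hence $a\in A\cap B[\pi,\pi]$ maps to $h$.

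For the kernel, $\ker(\mathrm{ab})=[\pi,\pi]$, so the kernel of the restriction is $A\cap B[\pi,\pi]\cap[\pi,\pi]=A\cap[\pi,\pi]$, since $[\pi,\pi]\subset B[\pi,\pi]$. This gives
\[
\frac{A\cap B[\pi,\pi]}{A\cap[\pi,\pi]}\cong L_\alpha\cap L_\beta .
\]
Swapping the roles of $\alpha$ and $\beta$ in the argument above gives the second isomorphism, with denominator $B\cap[\pi,\pi]$.
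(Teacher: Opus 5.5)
Your proof is correct and is essentially the paper's argument: restrict the abelianization $\pi\to H_1(\Sigma)$ to $A\cap B[\pi,\pi]$, get surjectivity onto $L_\alpha\cap L_\beta$ from $b^{-1}a\in[\pi,\pi]$, and identify the kernel with $A\cap B[\pi,\pi]\cap[\pi,\pi]=A\cap[\pi,\pi]$. The only difference is that you justify $\mathrm{ab}(A)=L_\alpha$ and $\mathrm{ab}(B)=L_\beta$ via van Kampen, which the paper uses without comment; your concern about the extra boundary circle is harmless here, since $\partial D$ lies in $[\pi,\pi]$ and so does not affect the image in homology.
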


\begin{proof}[Proof]
Denote $p:\pi\to\pi/[\pi,\pi]$ the projection onto $H_1(\Sigma)$. Let us check that $p(A)\cap p(B) = p(A\cap B[\pi,\pi])$. Let $x\in p(A) \cap p(B)$. There exists $a\in A$ and $b\in B$ such that $x=p(a)=p(b)$. Then $p(ab^{-1})=1$, so $ab^{-1}\in [\pi,\pi]$ and $a\in B[\pi,\pi]$. We obtain $x\in p(A\cap B[\pi,\pi])$. The other inclusion $p(A\cap B[\pi,\pi])\subset p(A)\cap p(B)$ is also verified since $[\pi,\pi]$ is the kernel of $p$. 

Because $L_\alpha \cap L_\beta$ is the intersection of the images of $A$ and $B$ by $p$, we obtain 
\[ 
L_\alpha \cap L_\beta \cong p(A)\cap p(B) \cong p(A\cap B[\pi,\pi]) \cong \frac{A\cap B[\pi,\pi]}{A\cap B[\pi,\pi] \cap [\pi,\pi]} \cong \frac{A\cap B[\pi,\pi]}{A\cap[\pi,\pi]}.
\]
\end{proof}

\begin{lemma}
\label{lemma:eta_AA}
Let $K_\alpha := \mathrm{ker}(\Z[\pi]\to \Z[\pi_1(H_\alpha)])$. We have $\eta(A,A)\subset K_\alpha$.
\end{lemma}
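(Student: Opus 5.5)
The plan is to combine the Fox pairing identities of Lemma~\ref{Fox} with the fact that $A$ is normally generated by the meridians $\alpha_1,\dots,\alpha_g$, which bound disks in $H_\alpha$. Write $q:\Z[\pi]\to\Z[\pi_1(H_\alpha)]$ for the ring map induced by inclusion. Then $K_\alpha=\ker q$ is the two-sided ideal generated by the elements $a-1$ with $a\in A$. It is enough to show $q(\eta(a,b))=0$ for all $a,b\in A$.

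\textbf{Reduction to generators.} Choose based loops $\gamma_1,\dots,\gamma_g\in\pi$, each obtained by joining $*$ to a point of $\alpha_i$ by an arc and going once around $\alpha_i$. Then $A$ is the normal closure of $\{\gamma_i\}$, so every element of $A$ is a product of conjugates $w\gamma_i^{\pm1}w^{-1}$.

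Apply \eqref{eq:Fox1} to a product $xx'$ with $x,x'\in\pi$:
\[
\eta(xx',y)=x\,\eta(x',y)+\eta(x,y).
\]
If $x$ is itself a product of elements, iterating this expresses $\eta$ of the product as a sum of terms $(\text{prefix})\cdot\eta(\text{factor},y)$. The same holds on the right with \eqref{eq:Fox2}, where the factors come out multiplied on the right.

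For conjugates and inverses we use two identities. First, $\eta(x^{-1},y)=-x^{-1}\eta(x,y)$. Second, expanding $\eta(w\gamma w^{-1},y)$ gives
\[
w\gamma\,\eta(w^{-1},y)+w\,\eta(\gamma,y)+\eta(w,y),
\]
and since $\eta(w^{-1},y)=-w^{-1}\eta(w,y)$ this simplifies to
\[
(1-w\gamma w^{-1})\,\eta(w,y)+w\,\eta(\gamma,y).
\]
Under $q$ the first term dies, because $w\gamma w^{-1}\in A$. The analogous right-hand identities hold for the second slot. The conclusion is that, modulo $K_\alpha$, $\eta(a,b)$ is a $\Z[\pi]$-bimodule combination of the values $\eta(\gamma_i,\gamma_j)$. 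Here we use that $K_\alpha$ is a two-sided ideal and that all the correction terms carry a factor $(1-\text{element of }A)$.

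\textbf{Computation on generators.} It remains to show $q(\eta(\gamma_i,\gamma_j))=0$, and this is the main geometric step. Represent $\gamma_i$ by a loop based at $\bullet$ running along a tail to $\alpha_i$, around $\alpha_i$, and back. Represent $\gamma_j$ similarly, based at $\blacktriangle$ and using a parallel pushoff of $\alpha_j$, which is disjoint from $\alpha_i$ when $i\neq j$.

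The intersection points then fall into two groups:
\begin{itemize}
\item intersections of tails with curves or with other tails, which come in pairs of opposite sign;
\item intersections of the pushoff of $\alpha_i$ with $\alpha_i$ itself, which can be avoided entirely by pushing off.
\end{itemize}
For each canceling pair of tail intersections, the two loops $\overline{\nu}_{*\bullet}\alpha_{\bullet p}\beta_{p\blacktriangle}\overline{\nu}_{\blacktriangle*}$ differ by insertion of a loop around some $\alpha_k$. That inserted loop is trivial in $\pi_1(H_\alpha)$, so the two terms cancel after applying $q$.

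To keep this bookkeeping simple, I would choose the tails in advance: take disjoint arcs from a small neighborhood of $*$ to the curves $\alpha_i$, away from all other $\alpha$'s. Then tail/curve crossings occur only near the basepoint, where they cancel cleanly. The remaining intersections between the $\gamma_i$-loop and the $\gamma_j$-loop come in pairs along parallel tails. I expect that verifying the sign and word bookkeeping in this local picture is the main obstacle. Once it is done, we get $q(\eta(\gamma_i,\gamma_j))=0$, hence $\eta(A,A)\subset K_\alpha$.
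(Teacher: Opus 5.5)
Your proposal is correct and follows the paper's proof. You use the same Fox-pairing reduction, via $q(1-w\gamma w^{-1})=0$, to the vanishing of $q(\eta(\gamma_i,\gamma_j))$; your paired-intersection argument (the two words differ by insertion of a loop around an $\alpha$-curve) justifies that generator case more fully than the paper, which only asserts it on the grounds that the $\alpha_i$ bound disks, and the remaining bookkeeping can be bypassed by choosing disjoint tails in the planar surface $S\setminus\bigcup_k\alpha_k$ (approaching $\alpha_i$ and its pushoff from opposite sides when $i=j$), which makes the two lassos disjoint so that $\eta(\gamma_i,\gamma_j)=0$ exactly for suitable conjugates, enough by your conjugation formulas.
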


\begin{proof}[Proof]
We denote by $\kappa$ the map $\Z[\pi]\to\Z[\pi_1(H_\alpha)]$, so that $K_\alpha = ker(\kappa)$. The group $A$ is normally generated by $\alpha_1,...,\alpha_g$ in $\pi$. Let $i,j\in\{1,...,g \}$ and $x,y\in \pi$. By applying Fox pairing identity ($\ref{eq:Fox1}$) twice, we obtain
\[ \kappa(\eta(x\alpha_i x^{-1},y\alpha_j y^{-1})) = \kappa(x\alpha_i\eta(x^{-1},y\alpha_j y^{-1}) + x\eta(\alpha_i,y\alpha_j y^{-1})+\eta(x,y\alpha_j y^{-1}))\]

Using ($\ref{eq:Fox1}$), we can check $\eta(x^{-1},.)=-x^{-1}\eta(x,.)$, so that 
\begin{align*}
\kappa(\eta(x\alpha_i x^{-1},y\alpha_j y^{-1})) &= \kappa(-x\alpha_i x^{-1}\eta(x,y\alpha_j y^{-1}) + x\eta(\alpha_i,y\alpha_j y^{-1})+\eta(x,y\alpha_j y^{-1}))\\
&=\kappa(x)\kappa(\eta(\alpha_i,y\alpha_j y^{-1}))
\end{align*}

Similarly, it follows from Fox pairing identity ($\ref{eq:Fox2}$) that
\begin{align*}
\kappa(\eta(x\alpha_i x^{-1},y\alpha_j y^{-1})) &= \kappa(x)\kappa(\eta(\alpha_i,y)\alpha_j y^{-1} + \eta(\alpha_i,\alpha_j)y^{-1} + \eta(\alpha_i,y^{-1}) ) \\
&=\kappa(x)\kappa(-\eta(\alpha_i,y^{-1})y\alpha_j y^{-1} + \eta(\alpha_i,\alpha_j)y^{-1} + \eta(\alpha_i,y^{-1}))\\
&=\kappa(x)\kappa(\eta(\alpha_i,\alpha_j))\kappa(y^{-1})
\end{align*}

Since $\alpha_i$ and $\alpha_j$ bound disks in $H_\alpha$, we have $\kappa(\eta(\alpha_i,\alpha_j))=0$. Therefore $\eta(x\alpha_i x^{-1},y\alpha_j y^{-1})\in K_\alpha$ and it follows that $\eta(A,A)\subset K_\alpha$. 
\end{proof}

The following proposition defines the reduction $\overline{\eta}$ of the homotopic intersection form $\eta$ and proves that it is well defined. 

\begin{prop}
Turaev's homotopy intersection form $\eta$ induces a map $\overline{\eta}$ such that the following diagram commutes
\begin{center}
\begin{tikzcd}
(A\cap B[\pi,\pi]) \times (B\cap A[\pi,\pi]) \arrow[d]\arrow[r,"\eta"] & I \arrow[r] & \displaystyle{\frac{I}{I^2}}\cong H_1(\Sigma) \arrow[r] & \displaystyle{\frac{H_1(\Sigma)}{L_\alpha+L_\beta}}\\
\displaystyle{\frac{A\cap B[\pi,\pi]}{A\cap[\pi,\pi]}} \times  \displaystyle{\frac{B\cap A[\pi,\pi]}{B\cap[\pi,\pi]}}\arrow[rrru,bend right=25,"\overline{\eta}"]
\end{tikzcd}
\end{center}
\end{prop}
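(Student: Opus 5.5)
The plan is to check two things: that $\eta$ sends $(A\cap B[\pi,\pi])\times(B\cap A[\pi,\pi])$ into $I$, and that the resulting class in $H_1(\Sigma)/(L_\alpha+L_\beta)$ does not change when $a$ is multiplied by an element of $A\cap[\pi,\pi]$ or $b$ by an element of $B\cap[\pi,\pi]$. Throughout, I use two consequences of the Fox identities of Lemma~\ref{Fox}. First, $\eta(1,\cdot)=0=\eta(\cdot,1)$; this follows by applying (\ref{eq:Fox1}) to $1\cdot 1$. Hence $\eta(x,y)=\eta(x-1,y-1)$ for all $x,y\in\pi$. Second, for $x,y\in\pi$ the augmentation $\varepsilon(\eta(x,y))$ is the algebraic intersection number $\omega([x],[y])$.

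\emph{Landing in $I$.} Take $a\in A\cap B[\pi,\pi]$ and $b\in B\cap A[\pi,\pi]$. Then $[a]\in L_\alpha\cap L_\beta$ and $[b]\in L_\beta$. Since $L_\beta$ is Lagrangian, $\omega([a],[b])=0$, so $\eta(a,b)\in I$.

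\emph{Independence of the choice of $a$.} Replace $a$ by $aa'$ with $a'\in A\cap[\pi,\pi]$. Since $\varepsilon(a')=1$, identity (\ref{eq:Fox1}) gives
\[
\eta(aa',b)=a\,\eta(a',b)+\eta(a,b).
\]
I will show that $\eta(a',b)\in I$ and that its class in $I/I^2\cong H_1(\Sigma)$ lies in $L_\alpha$. Once that is done, $a\,\eta(a',b)\equiv\eta(a',b)$ mod $I^2$, because $(a-1)\eta(a',b)\in I^2$, and the correction vanishes in the quotient.

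To analyse $\eta(a',b)$, write $b=a_0c$ with $a_0\in A$ and $c\in[\pi,\pi]$. By (\ref{eq:Fox2}),
\[
\eta(a',b)=\eta(a',a_0)\,c+\eta(a',c).
\]
The second term is $\eta(a'-1,c-1)$ with both $a'-1$ and $c-1$ in $I^2$, by Lemma~\ref{lemma:Magnus}. Lemma~\ref{lemme_ideal} then puts it in $I^{2}$.

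For the first term, Lemma~\ref{lemma:eta_AA} gives $\eta(a',a_0)\in K_\alpha$. Its augmentation is $\omega([a'],[a_0])=0$, so it lies in $K_\alpha\cap I$. Hence $\eta(a',a_0)\,c\equiv\eta(a',a_0)$ mod $I^2$.

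It remains to see that the image of $K_\alpha\cap I$ in $I/I^2=H_1(\Sigma)$ is contained in $L_\alpha$. The map $\Z[\pi]\to\Z[\pi_1(H_\alpha)]$ sends $I$ to the augmentation ideal $I_\alpha$ and $I^2$ into $I_\alpha^2$. The induced map $I/I^2\to I_\alpha/I_\alpha^2$ is exactly $\iota_\alpha\colon H_1(\Sigma)\to H_1(H_\alpha)$. An element of $K_\alpha$ maps to $0$, so its class lies in $\ker\iota_\alpha=L_\alpha$.

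\emph{Independence of the choice of $b$.} This is the symmetric argument. Replace $b$ by $bb'$ with $b'\in B\cap[\pi,\pi]$ and use (\ref{eq:Fox2}). Write $a=b_0d$ with $b_0\in B$ and $d\in[\pi,\pi]$, and use the analogue of Lemma~\ref{lemma:eta_AA} for $B$, namely $\eta(B,B)\subset K_\beta$, proved identically. The correction term then lands in $L_\beta$ modulo $I^2$.

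I expect the main obstacle to be the step just described for $a$: the naive estimate from Lemma~\ref{lemme_ideal} only gives $\eta(a',b)\in I$, which is not enough. The key device is splitting $b$ into an $A$-part and a commutator part, so that Lemma~\ref{lemma:eta_AA} together with the identification of $K_\alpha\cap I$ modulo $I^2$ with $L_\alpha$ finishes the argument.
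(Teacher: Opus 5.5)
Your proof is correct and follows essentially the same route as the paper. You split $b=a_0c$ with $a_0\in A$ and $c\in[\pi,\pi]$, expand by the Fox identity, dispose of $\eta(a',c)$ via Lemmas~\ref{lemme_ideal} and~\ref{lemma:Magnus}, and control $\eta(a',a_0)$ by Lemma~\ref{lemma:eta_AA}, supplying along the way two details the paper leaves implicit: that $\eta(a',a_0)\,c\equiv\eta(a',a_0)$ modulo $I^2$, and that the image of $K_\alpha\cap I$ in $I/I^2\cong H_1(\Sigma)$ lies in $\ker\iota_\alpha=L_\alpha$.
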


\begin{proof}[Proof]

Let $a\in A\cap B[\pi,\pi]$ and $b\in B\cap A[\pi,\pi]$. Since $[a],[b] \in L_\alpha\cap L_\beta$, their homological intersection is trivial and $\varepsilon(\eta(a,b))=0$. Thus $\eta$ is well defined from $(A\cap B[\pi,\pi])\times(B\cap A[\pi,\pi])$ to $I$. 

The isomorphism $\pi/[\pi,\pi]\cong I/I^2$ is induced by the map from $\pi \to I$ which sends $x$ to $x-1$. We denote by $\zeta$ the following composition:
\[
\zeta : \left(A\cap B[\pi,\pi]\right)\times \left(B\cap A[\pi,\pi]\right) \xlongrightarrow[]{\eta} I \longrightarrow I/I^2\cong H_1(\Sigma) \longrightarrow \frac{H_1(\Sigma)}{L_\alpha + L_\beta} 
\]
It remains to check that $\zeta$ descends to the quotients by $A\cap[\pi,\pi]$ and $B\cap[\pi,\pi]$. 

Let $x\in A\cap [\pi,\pi]$, $a\in A$ and $y\in[\pi,\pi]$ such that $ay\in B\cap A[\pi,\pi]$. By Lemma~\ref{Fox}, we have
\[\eta(x,ay) = \eta(x,a)y + \eta(x,y) \]
The second term, which evaluates $\eta$ on two elements of $[\pi,\pi]=\Gamma_2\pi$, lies in $I^2$ by Lemmas~\ref{lemme_ideal} and \ref{lemma:Magnus}. The first term vanishes in the quotient by $L_\alpha$ by Lemma \ref{lemma:eta_AA}. Thus $\zeta(x,ay)$ is trivial in $H_1(\Sigma)/(L_\alpha + L_\beta)$. Hence, $\forall\, a'\in A\cap[\pi,\pi]$, $w\in A\cap B[\pi,\pi]$, and $z\in B\cap A[\pi,\pi]$, we have
\[
\zeta(a'w,z) = a'\zeta(w,z) + \zeta(a,z) = \zeta(w,z) \in \frac{H_1(\Sigma)}{L_\alpha + L_\beta}
\]

Therefore, $\zeta$ descends to the quotient
$\frac{A\cap B[\pi,\pi]}{A\cap[\pi,\pi]}$. The same argument applies to
$\frac{B\cap A[\pi,\pi]}{B\cap[\pi,\pi]}$. 
\end{proof}

\begin{rqe}
Let $c$ be a multicurve on the surface $\Sigma$, such that $[c]\in L_\alpha\cap L_\beta$. By selecting paths from each component of $c$ to the basepoint $*$, we construct a new multicurve $\tilde{c}$, which can be regarded as an element of $\Z[\pi]$. One can verify that $\overline{\eta}(\tilde{c},.)$ is independent of the choice of paths. For simplicity, we will write $\overline{\eta}(c,.)$ instead of $\overline{\eta}(\tilde{c},.)$ in what follows.
\end{rqe}

We can now prove Theorem \ref{thm:B} stated in the introduction. 

\begin{thm}
\label{thm:eta_varphi}
The following diagram is commutative 

\begin{center}
\begin{tikzcd}
\displaystyle{\frac{A\cap B[\pi,\pi]}{A\cap [\pi,\pi]}} \arrow[d,shift left=0.5cm, "\cong"] \hspace{-1cm}& \times &\hspace{-1cm}  \displaystyle{\frac{B\cap A[\pi,\pi]}{A\cap [\pi,\pi]}}\arrow[d,"\cong",shift left=-0.5cm] \arrow[rr,"\overline{\eta}"]& & \displaystyle{\frac{H_1(\Sigma)}{L_\alpha + L_\beta}}\arrow[dd,"\cong"] \\
L_\alpha \cap L_\beta \arrow[d,"\cong",shift left=0.5cm] \hspace{-1cm}&&\hspace{-1cm} L_\alpha \cap L_\beta\arrow[d,"\cong",shift left=-0.5cm]  &&\\
H^1(M) \hspace{-1cm}& \times &\hspace{-1cm} H^1(M) \arrow[rr,"\smile"] & & H^2(M) 
\end{tikzcd}
\end{center}
where the vertical maps are the isomorphisms induced by the Heegaard splitting of $M$. 
\end{thm}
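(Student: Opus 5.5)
Since both $\overline{\eta}$ and $\varphi$ are well defined on the quotients, it suffices to check commutativity on convenient representatives. Given $x,x'\in H^1(M)$, I will take the multicurves $c_\alpha$ (parallel copies of $\alpha$-curves) and $c'_\beta$ (parallel copies of $\beta$-curves) from Theorem \ref{1-cycle}. By the remark preceding the theorem, joining each component to the base point by arcs gives elements $\tilde c_\alpha\in A\cap B[\pi,\pi]$ and $\tilde c'_\beta\in B\cap A[\pi,\pi]$. These map to $[c_\alpha]=P(x)$ and $[c'_\beta]=P(x')$ in $L_\alpha\cap L_\beta$. Concretely, I write $\tilde c_\alpha=\prod_k \gamma_k c^k_\alpha\gamma_k^{-1}$ and $\tilde c'_\beta=\prod_l \delta_l c'^l_\beta\delta_l^{-1}$. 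The goal is then to show that the image of $\eta(\tilde c_\alpha,\tilde c'_\beta)$ in $I/I^2\cong H_1(\Sigma)$, reduced modulo $L_\alpha+L_\beta$, equals the class of the $1$-cycle $\sum r_i^k a_i^k+\sum s_i^k b_i^k$.

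\textbf{Step 1: reduce to components.} Expanding with the Fox identities (\ref{eq:Fox1}) and (\ref{eq:Fox2}) gives terms $\gamma\,\eta(c^k_\alpha,\delta c'^l_\beta\delta^{-1})\gamma^{-1}$, plus correction terms of the form $\eta(\gamma,\cdot)$ multiplied by elements $c-1$ with $c\in A$ or $c\in B$. Modulo $I^2$, left or right multiplication by a group element acts trivially on $I/I^2$. A term of the form $(c-1)\eta(\cdot,\cdot)$ or $\eta(\cdot,\cdot)(c-1)$ lies in $I^2$ only when the $\eta$ factor lies in $I$; otherwise its image is $\varepsilon(\eta)\,[c]$, which lies in $L_\alpha$ or $L_\beta$. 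So all conjugations and cross terms vanish in $H_1(\Sigma)/(L_\alpha+L_\beta)$. I therefore need, for each component $c^k_\alpha$ and each component $c'^l_\beta$, the class of $\eta(c^k_\alpha,c'^l_\beta)$, and then to sum over $k,l$. I expect the sum of the diagonal contributions to account for $\sum_i r_i^k a_i^k$ once the choice of base points is fixed.

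\textbf{Step 2: compute $\eta$ modulo $I^2$ by the defining formula.} We have
\[\eta(c_\alpha,c'_\beta)=\sum_p \varepsilon_p\,\overline{\nu}_{*\bullet}(c_\alpha)_{\bullet p}(c'_\beta)_{p\blacktriangle}\overline{\nu}_{\blacktriangle *}.\]
Since $\sum_p\varepsilon_p=0$, this element lies in $I$ and equals $\sum_p\varepsilon_p(g_p-1)$. Its image in $H_1(\Sigma)$ is $\sum_p\varepsilon_p[g_p]$. Here $[g_p]$ is the homology class of the loop that runs from the base point along $c_\alpha$ to $p$, then along $c'_\beta$ back to the base point, closed up by fixed tails. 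Expanding $[g_p]$ as the chain (arc of $c_\alpha$ up to $p$) $+$ (arc of $c'_\beta$ from $p$) $+$ (tails), I will regroup the sum. The $c_\alpha$-arc $a_i^k$ appears with coefficient equal to the sum of $\varepsilon_p$ over the intersection points $p$ lying beyond it, which by $\sum\varepsilon_p=0$ equals $-\sum_{j<i}\varepsilon_j$ up to a global constant. The $c'_\beta$-arcs give the $s_i^k$ in the same way. The tail contributions cancel because $\sum\varepsilon_p=0$. Constant shifts in the coefficients add a multiple of the full curve $[c^k_\alpha]$ or $[c'^l_\beta]$, which lies in $L_\alpha+L_\beta$; this matches the remark that changing the base point of the numbering is harmless.

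\textbf{Step 3: match signs and conclude.} It remains to check that the sign conventions for $\varepsilon_p(\alpha,\beta)$ in $\eta$ agree with $\varepsilon_i(c'_\beta,c_\alpha^k)$ and with the orientation convention of \cite{Bredon} used for $S'\cap S$, so that the result is exactly $\varphi(c_\alpha,c'_\beta)$ and not its negative. By the definition of $\varphi$ and Theorem \ref{1-cycle}, the diagram then commutes. I expect the main obstacle to be the bookkeeping in Steps 1–2 when there are several components with intersections between distinct components. There the ordering of points along a path $c_\alpha\to p\to c'_\beta$ mixes components, and one must check that each arc receives the coefficient $r_i^k$ (respectively $s_i^k$) up to elements of $L_\alpha+L_\beta$. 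My first attempt will be to treat each pair $(c^k_\alpha,c'^l_\beta)$ separately, which is possible by Step 1, and to verify the telescoping identity directly.
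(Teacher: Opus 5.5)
Your Steps 2 and 3 are the paper's proof. It expands $\eta(c^k_\alpha,c'_\beta)$ by the defining formula and notes that $a_i^k$ lies on $(c^k_\alpha)_{\bullet p_j}$ exactly when $p_j$ comes after it. It then gets $t_i^k=\sum_{j\geq i}\varepsilon_{p_j}(c^k_\alpha,c'_\beta)=r_i^k$ from $\varepsilon_p(c^k_\alpha,c'_\beta)=-\varepsilon_p(c'_\beta,c^k_\alpha)$, and symmetrically $u_i^k=s_i^k$. Your Step 3 therefore reduces to this sign identity plus Theorem~\ref{1-cycle}.

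What fails as written is Step 1, and with it your plan to treat each pair $(c^k_\alpha,c'^l_\beta)$ separately. Multiplication by $\gamma\in\pi$ acts trivially on $I/I^2$ only on elements of $I$; in general $\gamma u\equiv u+\varepsilon(u)(\gamma-1)\pmod{I^2}$. The Lagrangian property gives $\omega(c^k_\alpha,c'_\beta)=0$ and $\omega(c_\alpha,c'^l_\beta)=0$, but not $\omega(c^k_\alpha,c'^l_\beta)=0$. That number is $\pm\omega(\alpha_i,\beta_j)$ for the underlying curves, an entry of the intersection matrix of the diagram, and it is typically nonzero. When it is nonzero, $\eta(\gamma_k c^k_\alpha\gamma_k^{-1},\delta_l c'^l_\beta\delta_l^{-1})\notin I$, so it has no class in $I/I^2$. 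The conjugations then produce terms $\pm\omega(c^k_\alpha,c'^l_\beta)[\gamma_k]$ and $\pm\omega(c^k_\alpha,c'^l_\beta)[\delta_l]$, which have no reason to lie in $L_\alpha+L_\beta$. The per-pair telescoping in Step 2 breaks for the same reason, since $\sum_{p\in c^k_\alpha\cap c'^l_\beta}\varepsilon_p=\omega(c^k_\alpha,c'^l_\beta)$.

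These stray terms do cancel, but only after summation. The $[\gamma_k]$-terms cancel over $l$, because $\sum_l\omega(c^k_\alpha,c'^l_\beta)=\omega(c^k_\alpha,c'_\beta)=0$, and the $[\delta_l]$-terms cancel over $k$.

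The clean fix is to drop Step 1 and run Step 2 directly on the based loops $\tilde c_\alpha,\tilde c'_\beta$. Read the coefficient of $a_i^k$ from all points of $c^k_\alpha\cap c'_\beta$, and that of $b_i^l$ from all points of $c_\alpha\cap c'^l_\beta$. This is how the paper organizes the computation, and it is why $r_i^k$ is defined using the whole of $c'_\beta$. With this grouping:
\begin{itemize}
\item the tail $\gamma_k$ gets total coefficient $\omega(c^k_\alpha,c'_\beta)=0$;
\item the tail $\delta_l$ gets total coefficient $\omega(c_\alpha,c'^l_\beta)=0$;
\item intersection points lying on the tails come in opposite-sign pairs whose loops differ by one full component, so they contribute only to $L_\alpha+L_\beta$;
\item the coefficient of $a_i^k$ is the running sum of signs along $c^k_\alpha$, i.e.\ $r_i^k$.
\end{itemize}
With this regrouping your argument is complete. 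It is also more careful than the paper's, which leaves the tails implicit.
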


\begin{proof}[Proof]
Using the notation from the first part, we denote by $c_\alpha^1,\dots,c_\alpha^N$ the $N$ components of $c_\alpha$, and by $c_\beta'^1,\dots,c_\beta'^M$ the $M$ components of the multicurve $c_\beta$.
For $k\in\{1,\dots,N\}$, let $a_1^k,\dots,a_{2n_k}^k$ be the $2n_k$ arcs of $c_\alpha^k\setminus(c_\alpha^k\cap c_\beta')$. Similarly, for $k\in\{1,\dots,M\}$, let $b_1^k,\dots,b_{2m_k}^k$ denote the $2m_k$ arcs of $c_\beta'^k\setminus(c_\beta'^k\cap c_\alpha)$. We can then express $\overline{\eta}(c_\alpha,c_\beta')$ as a $1$-cycle in $H_1(\Sigma)/(L_\alpha+L_\beta)$:
\[
\overline{\eta}(c_\alpha,c_\beta') = \sum_{k=1}^N\sum_{i=1}^{2n_k} t_i^k a_i^k + \sum_{k=1}^M\sum_{i=1}^{2m_k} u_i^k b_i^k.
\]

It remains to determine the coefficients $t_i^k$ and $u_i^k$. We first determine the coefficients $t_i^k$. Number the intersection points $p_i$ of $c_\alpha^k$ with $c_\beta'$ following the orientation of $c_\alpha^k$, we have
\[
\eta(c_\alpha^k,c_\beta') = \sum_{j=1}^{2n_k}\varepsilon_{p_j}(c^k_\alpha,c_\beta) \overline{\nu}_{*\bullet}(c_\alpha^k)_{\bullet p_j}(c_\beta')_{p_j\blacktriangle}\overline{\nu}_{\blacktriangle*}  
\]

For the arc $a_i^k$ to be contained in $(c_\alpha^k)_{\bullet p_j}(c_\beta')_{p_j\blacktriangle}$, we must leave $c_\alpha^k$ after traversing $a_i^k$, that is, $j\geq i$. It follows that
\[
t_i^k = \sum_{j=i}^{2n_k}\varepsilon_{p_j}(c_\alpha^k,c_\beta') = \sum_{j=i}^{2n_k}-\varepsilon_{p_j}(c_\beta',c_\alpha^k) = r_i^k
\]

Next, to determine the coefficients $u_i^k$, we number the intersection points of $c_\beta'^k$ with $c_\alpha$ following the orientation of $c_\beta'^k$. For $b_i^k$ to be contained in $(c_\alpha^k)_{\bullet p_j}(c_\beta')_{p_j\blacktriangle}$, we must arrive on $c_\beta'^k$ before the arc $b_i^k$, hence $j<i$. Noting that the sign used to define the coefficients $s_i^k$ is the sign of the intersection of $c_\beta^k$ with $c_\alpha$, we obtain
\[
u_i^k = \sum_{j=1}^{i}\varepsilon_{p_j}(c_\alpha,c_\beta'^k) = s_i^k
\]

Thus $\varphi(c_\alpha,c_\beta') = \overline{\eta}(c_\alpha,c_\beta')$. 
\end{proof}

\subsection{Properties of $\overline{\eta}$}
\label{varphi_properties}

Theorem \ref{thm:eta_varphi} shows that the cup product form can be recovered from the reduction $\overline{\eta}$ of $\eta$. The purpose of this section is to verify that classical properties of the triple-cup product form, such as skew-symmetry and invariance under certain twists along surfaces, hold for $\overline{\eta}$. 

We first consider the skew-symmetry of $\eta$. In the following, we set \[U:=\frac{A\cap B[\pi,\pi]}{A\cap[\pi,\pi]}\hspace{1cm}V:=\frac{B\cap A[\pi,\pi]}{B\cap[\pi,\pi]}\]

\begin{prop}
Let $x,x'\in H^1(M)$ and $(c_\alpha, c_\beta),(c_\alpha',c_\beta')$ pairs of multicurves such that their homology classes satisfy $[c_\alpha]=[c_\beta]$ and $[c_\alpha']=[c_\beta']$ in $L_\alpha\cap L_\beta$ and are associated by duality to $x$ and $x'$. 
The map $\overline{\eta} : U \times V \to H_1(M)$ is skew-symmetric in the sense that $\overline{\eta}(c_\alpha,c'_\beta)=-\overline{\eta}(c'_\alpha,c_\beta)$. 
\end{prop}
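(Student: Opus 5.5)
Since $\overline{\eta}$ is well defined on $U\times V$ (previous proposition), the value $\overline{\eta}(c_\alpha,c'_\beta)$ depends only on the classes of $c_\alpha$ in $U$ and of $c'_\beta$ in $V$. Likewise $\overline{\eta}(c'_\alpha,c_\beta)$ depends only on the classes of $c'_\alpha$ and $c_\beta$. The plan is to compare the two values through the difference $\eta(a,b)+\eta(a',b')$. Here $a,a'$ are loops representing $c_\alpha,c'_\alpha$ in $A\cap B[\pi,\pi]$, and $b,b'$ are loops representing $c'_\beta,c_\beta$ in $B\cap A[\pi,\pi]$. We have $[a]=[b']=x$ and $[a']=[b]=x'$ in $L_\alpha\cap L_\beta$, so we may write $b'=a\,z$ and $b=a'\,z'$ with $z\in\,$ (the appropriate coset) and $z,z'\in[\pi,\pi]$ up to elements of $A$ and $B$.

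The first ingredient is the classical antisymmetry property of Turaev's form, taken from \cite{MassuyeauTuraev13}. For $u,v\in I$ we have
\[
\eta(u,v)+\overline{\eta(v,u)} = \overline{u}\,v \quad\text{modulo terms in } I^2,
\]
where $\overline{\cdot}$ is the antipode $g\mapsto g^{-1}$. The antipode acts as $-1$ on $I/I^2\cong H_1(\Sigma)$. After reducing to $I/I^2$, this gives
\[
\eta(u,v) \equiv \eta(v,u) \pmod{I^2} \quad \text{whenever } u,v\in I.
\]
Indeed $\overline{u}v\in I^2$, and the antipode negates classes in $I/I^2$, which accounts for the sign. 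We apply this to $u=a-1$ and $v=a'-1$. Since $\eta(x,y)$ and $\eta(x-1,y-1)$ agree (because $\eta(1,\cdot)=\eta(\cdot,1)=0$), this yields a symmetry relation in $H_1(\Sigma)$ between $\eta(a,a')$ and $\eta(a',a)$.

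Second, we express everything through $\zeta$ and apply the Fox identities (Lemma \ref{Fox}). Expanding $\zeta(a,b)=\zeta(a,a'z')=\zeta(a,a')+\zeta(a,z')$ and using the arguments of the well-definedness proof together with Lemma \ref{lemma:eta_AA}, the term $\zeta(a,z')$ vanishes modulo $L_\alpha$. Then $\zeta(a,a')\in L_\alpha$ by Lemma \ref{lemma:eta_AA}, since $a,a'\in A$. This makes $\zeta(a,b)$ zero, which is too strong, so this decomposition must be done more carefully.

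The correct split is this. Write $b=a' y$ with $y\in[\pi,\pi]$ only when the homology classes agree, and $a'$ is itself in $B[\pi,\pi]$. Then $\eta(a,b)=\eta(a,a')y+\eta(a,y)$ reduces to $\eta(a,a')$ plus a term in $I^2$ only if $a\in[\pi,\pi]$, which is false. So instead I would argue geometrically. By Theorem \ref{thm:eta_varphi}, $\overline{\eta}(c_\alpha,c'_\beta)$ is Poincaré dual to $x\smile x'$, and $\overline{\eta}(c'_\alpha,c_\beta)$ is dual to $x'\smile x$. The cup product on $H^1$ is graded-commutative, so $x'\smile x=-x\smile x'$, and the claim follows.

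To stay within the $\eta$-framework, I would check this combinatorially. The formula of Theorem \ref{1-cycle} for $(c'_\alpha,c_\beta)$ is obtained from that for $(c_\alpha,c'_\beta)$ by the swap $(\alpha,\beta)\leftrightarrow(\beta,\alpha)$, which reverses the orientation of $\Sigma$ relative to $M$ and flips all signs $\varepsilon$. The main obstacle is tracking these orientation conventions. In particular, the Bredon convention for $S'\cap S$ versus $S\cap S'$ must produce exactly one global sign.
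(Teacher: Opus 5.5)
Your final argument (the ``geometric'' one) is correct and complete, and it takes a genuinely different route from the paper. Theorem \ref{thm:eta_varphi} applies to both pairs $(c_\alpha,c'_\beta)$ and $(c'_\alpha,c_\beta)$, since each has an $\alpha$-multicurve in the first slot and a $\beta$-multicurve in the second. Under the same isomorphism $H_1(\Sigma)/(L_\alpha+L_\beta)\cong H^2(M)$, they correspond to $x\smile x'$ and $x'\smile x$, and integral graded commutativity in degree $1$ gives the sign.

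The paper does not use Theorem \ref{thm:eta_varphi} here; it argues intrinsically with $\eta$. Since $[c_\alpha]=[c_\beta]$ and $[c'_\alpha]=[c'_\beta]$, the elements $c_\alpha-c_\beta$ and $c'_\beta-c'_\alpha$ lie in $I^2$. By Lemma \ref{lemme_ideal} this gives
\[
0=[\eta(c_\alpha,c'_\beta)]+[\eta(c_\beta,c'_\alpha)]-[\eta(c_\alpha,c'_\alpha)]-[\eta(c_\beta,c'_\beta)] \in H_1(\Sigma).
\]
The last two terms lie in $L_\alpha$ and $L_\beta$ by Lemma \ref{lemma:eta_AA}. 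The mixed term $\eta(c_\beta,c'_\alpha)$ is then turned into $\eta(c'_\alpha,c_\beta)$ modulo $I^2$ by the almost-skew-symmetry $\eta(a,b)=-aS(\eta(b,a))b-(a-1)(b-1)$.

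Your route is two lines, but it takes skew-symmetry of $\smile$ as an input and rests on the whole geometric machinery of Theorems \ref{1-cycle} and \ref{thm:eta_varphi}, including their orientation conventions. The paper's route is independent of these. Combined with Theorem \ref{thm:eta_varphi}, it therefore genuinely \emph{recovers} the skew-symmetry of the cup product, as the introduction advertises, and it also checks the signs in those theorems. With your proof, that recovery would be circular.

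You should delete the abandoned algebraic material rather than leave it in the write-up. As you noticed, writing $b=a'z'$ with $z'\in[\pi,\pi]$ does not help, because $\eta(a,z')=\eta(a-1,z'-1)\in\eta(I,I^2)\subset I$ need not lie in $I^2$. The missing trick is to subtract homologous representatives in \emph{both} slots at once, as above.

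Your congruence $\eta(u,v)\equiv\eta(v,u) \bmod I^2$ is false for general $u,v\in I$. Since $\varepsilon\circ\eta=\omega$ is antisymmetric and $S$ acts by $-1$ only on $I/I^2$, not on $\Z[\pi]/I^2$, you need $\omega(u,v)=0$. Applying it to $(a,a')\in A\times A$ is also pointless, since both sides already lie in $L_\alpha$; the relevant pair is the mixed one $(c_\beta,c'_\alpha)$.

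The closing ``combinatorial check'' by swapping the roles of $\alpha$ and $\beta$ is unsupported and unnecessary, so drop it.
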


\begin{proof}[Proof] 
We observe that $\eta$ is "almost skew-symmetric" in the sense that 
\begin{equation}
\label{eq:eta_skew_sym}
\forall\,a,b\in \pi, \eta(a,b)=-aS(\eta(b,a))b - (a-1)(b-1)
\end{equation}
 where $S:\Z[\pi]\to \Z[\pi]$ is the antipode defined by $S(y)=\overline{y}$ (\hspace{-0.5px}\cite[\S $7.4$]{MassuyeauTuraev13}). 
 
Denote $\overline{\eta}_{UV}:=\overline{\eta}$ the map from $U\times V$ to $H_1(M)$ and $\overline{\eta}_{VU}$ the same map with exchanged arguments, from $V\times U$ to $H_1(M)$. The term $(a-1)(b-1)$ in \eqref{eq:eta_skew_sym} lies in $I^2$. Then, since $[c_\alpha], [c_\beta], [c_\alpha'],[c_\beta']\in L_\alpha\cap L_\beta$, we obtain, after quotienting \eqref{eq:eta_skew_sym} by $I^2$ and $L_\alpha+L_\beta$, 
\[\overline{\eta}_{UV}(c'_\alpha,c_\beta) = \overline{\eta}_{VU}(c_\beta,c'_\alpha). \]

Next, observe that by Lemma \ref{lemme_ideal} we have $\eta(I^2,I^2)\subset I^2$. Thus, by Lemma \ref{lemma:Magnus}, for every $a,b\in \Z[\pi]$ such that $[a]=[b]=0$ in $H_1(\Sigma)$, we have $[\eta(a,b)]=0$ in $H_1(\Sigma)$. We deduce that \[0 = [\eta(c_\alpha-c_\beta,c_\beta'-c_\alpha')]=[\eta(c_\alpha,c_\beta')]+[\eta(c_\beta,c_\alpha')] - [\eta(c_\alpha,c_\alpha')] - [\eta(c_\beta, c_\beta')]. \]

We know by Lemma \ref{lemma:eta_AA} that $\eta(A,A)\subset \mathrm{ker}(\Z[\pi]\to\Z[\pi_1(H_\alpha)])$ and $\eta(B,B)\subset \mathrm{ker}(\Z[\pi]\to\Z[\pi_1(H_\beta)])$. Viewing $c_\alpha,c_\alpha', c_\beta,c_\beta'$ as loops in $\pi$ obtained by concatenation of $\alpha$ and $\beta$-curves, we may regard $c_\alpha,c_\alpha'$ as elements of $A$ and $c_\beta,c_\beta'$ as elements of $B$. Thus $[\eta(c_\alpha,c_\alpha')]\in L_\alpha$ and $[\eta(c_\beta,c_\beta')]\in L_\beta$, and therefore
\[\overline{\eta}_{UV}(c_\alpha,c_\beta')=-\overline{\eta}_{VU}(c_\beta,c_\alpha'). \] 

Finally, we obtain 
\[\overline{\eta}(c_\alpha,c_\beta') = \overline{\eta}_{UV}(c_\alpha,c_\beta') = - \overline{\eta}_{VU}(c_\beta,c_\alpha') = - \overline{\eta}_{UV}(c_\alpha',c_\beta) = - \overline{\eta}(c_\alpha',c_\beta) \]
\end{proof}

For $S$ a compact oriented surface with one boundary component, the mapping class group of $S$ is the group of orientation-preserving diffeomorphisms of $S$, up to isotopy:
\[\M(S) :=  \mbox{Diffeo}^{+,\partial}(S)/\mbox{isotopy rel }\partial\]
We denote by $\pi$ the fundamental group of the surface $S$ and by $(\Gamma_k\pi)_{k\geq 1}$ its lower central series, which is defined by $\Gamma_1\pi := \pi$ and $\Gamma_{k+1}\pi:= [\Gamma_k\pi , \pi]$. For $f$ a diffeomorphism of the surface $S$, we denote $f_\sharp$ the induced homomorphism on $\pi$. We have a group homomorphism $\M(S)\to\mathrm{Aut}(\pi)$ defined by $[f]\mapsto f_\sharp$. Since $\Gamma_{k+1}\pi$ is a characteristic subgroup of $\pi$, there is a canonical homomorphism $\mathrm{Aut}(\pi)\to \mathrm{Aut}(\pi/\Gamma_{k+1}\pi)$. 
Then, we denote $\rho_k$ be the composition $\rho_k : \M(S) \to \mathrm{Aut}(\pi) \to \mathrm{Aut}(\pi/\Gamma_{k+1}\pi)$. The Torelli group $\I(S)$ is defined as the kernel of $\rho_1$; in other words, it is the subgroup of $\M(S)$ that acts trivially in homology: 
\[\I(S) := \ker(\M(S) \xrightarrow{\rho_1} \mathrm{Aut}(\pi/\Gamma_2\pi)) \]
The Johnson kernel $\K(S)$ is defined as the kernel of $\rho_2$: 
\[\K(S) := \ker(\M(S) \xrightarrow{\rho_2} \mathrm{Aut}(\pi/\Gamma_3\pi)). \]

This subgroup determines a surgery equivalence relation for $3$-manifolds in the following way: 

\begin{defn}[$J_2$-equivalence]
Two closed, connected, oriented $3$-manifolds $M$ and $M'$ are \textit{$J_2$-equivalent} if there exists a Heegaard splitting $H_g\cup_{h} (-H_g)$ of $M$, and $s\in \K(\partial H_g\setminus \mathrm{int}(D))$, where $D$ is a disk in $\partial H_g$, such that $M'\cong H_g\cup_{h\circ s} (-H_g)$. 
\end{defn}

\begin{rqe}
This definition is equivalent to twisting any surface $S$, not necessarily a Heegaard surface, in $M$ by an element of $\K(S)$ (see for example \cite[Proposition $2.10$]{WB}). 
\end{rqe}

The $J_2$-equivalence can be reformulated in term of Heegaard diagrams. The manifold $M$ is $J_2$-equivalent to $M'$ if there exists $(\Sigma,\alpha,\beta)$ a Heegaard diagram of $M$ and $s\in\K(\Sigma\setminus \mathrm{int}(D))$ such that $(\Sigma,s(\alpha),\beta)$ is a Heegaard diagram of $M'$ (or equivalently if there exists $s'\in\K(\Sigma\setminus \mathrm{int}(D)) $ such that $(\Sigma,\alpha,s'(\beta))$ is a Heegaard diagram of $M'$). 

Cochran, Gerges, and Orr proved that the isomorphism class of the triple-cup product form is an invariant of the $J_2$-equivalence of $3$-manifolds \cite[Proposition 3.17]{CochranGergesOrr}. Therefore the expression of $\overline{\eta}$ should be invariant when we apply an element of the Johnson kernel to one of the family of curves $\alpha$ or $\beta$. This is what we verify in the following. 

\begin{prop}
\label{prop:eta_johnson}
Let $D$ be a small disk in $\Sigma$. Let $a,b$ be two multicurves on $\Sigma\setminus \mathrm{int}(D)$ such that $[a],[b]\in~L_\alpha\cap~L_\beta$ and let $\psi\in \K(\Sigma\setminus \mathrm{int}(D))$. We have \[\overline{\eta}(a,b) = \overline{\eta}(\psi(a),b)=\overline{\eta}(a,\psi(b)). \]
\end{prop}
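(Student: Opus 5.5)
We work directly with $\eta$ on $\Z[\pi]$ and use the Fox pairing identities of Lemma~\ref{Fox}, in the same spirit as the proof of the well-definedness of $\overline{\eta}$.

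We treat $\overline{\eta}(\psi(a),b)=\overline{\eta}(a,b)$; the other equality is symmetric, using \eqref{eq:Fox2} instead of \eqref{eq:Fox1}. Regard $a$ (with chosen paths to $*$) as an element of $\pi$. Since $\psi\in\K(\Sigma\setminus\mathrm{int}(D))$, the automorphism $\psi_\sharp$ acts trivially on $\pi/\Gamma_3\pi$. Hence we can write $\psi_\sharp(a)=a\,y$ with $y\in\Gamma_3\pi$. A first point to check is that $\psi(a)$ still defines a class of $U$. Its homology class equals $[a]$, so $\psi(a)$ can be used in $\overline{\eta}$. By the remark preceding Theorem~\ref{thm:eta_varphi}, the value of $\overline{\eta}$ depends only on the homology class, so it is enough to compute $\eta(ay,b)$ modulo $I^2$ and $L_\alpha+L_\beta$.

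The key computation uses \eqref{eq:Fox1}, with $\varepsilon(y)=1$:
\[
\eta(ay,b)=a\,\eta(y,b)+\eta(a,b).
\]
By Lemma~\ref{lemma:Magnus}, $y-1\in I^3$. Writing $\eta(y,b)=\eta(y-1,b)+\eta(1,b)$, the second term vanishes because $\eta(1,\cdot)=0$ by \eqref{eq:Fox1}. For the first term, Lemma~\ref{lemme_ideal} gives $\eta(I^3,I)\subset I^2$. Since $\eta(x,b)$ depends only on $b-1$ modulo $\eta(x,1)=0$, we get $\eta(y,b)=\eta(y-1,b-1)\in I^2$. Therefore $a\,\eta(y,b)\in I^2$, and
\[
\eta(\psi(a),b)\equiv\eta(a,b)\pmod{I^2}.
\]
This equality is already true in $H_1(\Sigma)$, hence also after projecting to $H_1(\Sigma)/(L_\alpha+L_\beta)$. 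If $a$ is a multicurve with several components, we apply this to each component, or to the product element, using bilinearity.

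The main obstacle is the identification of $\psi(a)$ with $\psi_\sharp$ applied to the based element chosen for $a$, together with its legitimacy as a representative in $U$. The issue is that $\psi(a)$ need not consist of $\alpha$-curves, and need not lie in $A$. So we must check that the quotient by $A\cap[\pi,\pi]$ in the definition of $U$ is compatible with this. I would try the following first: note that $\psi_\sharp(a)a^{-1}\in\Gamma_3\pi\subset[\pi,\pi]$. The computation above then shows that the $\eta$-values agree modulo $I^2$ regardless of membership in $A$. The Proposition follows once we interpret $\overline{\eta}$ on $\psi(a)$ through its homology class in $L_\alpha\cap L_\beta$, which is unchanged since $\psi$ acts trivially on homology.
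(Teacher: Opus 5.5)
Your proposal is correct and follows the paper's proof essentially verbatim: write $\psi(a)=a\gamma$ with $\gamma\in\Gamma_3\pi$, expand $\eta(a\gamma,b)=a\,\eta(\gamma,b)+\eta(a,b)$ by \eqref{eq:Fox1}, and kill the first term using $\eta(\Gamma_3\pi,\pi)\subset I^2$ from Lemmas~\ref{lemme_ideal} and~\ref{lemma:Magnus}. One small correction: the remark before Theorem~\ref{thm:eta_varphi} only gives independence from the choice of connecting paths, not dependence on the homology class alone (that would make the statement trivial). Your congruence $\eta(\psi(a),b)\equiv\eta(a,b)$ modulo $I^2$ does not need that remark, and it also settles your worry that $\psi(a)$ may not lie in $A$, a point the paper passes over silently.
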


\begin{proof}[Proof]
Since $\psi\in\K(\Sigma\setminus int(D))$ there exists $\gamma\in\Gamma_3\pi$ such that $\psi(a)=a\gamma$. Then, by Lemma \ref{Fox} we obtain 
\[\eta(\psi(a),b)=\eta(a\gamma,b)=a\eta(\gamma,b) + \eta(a,b)\]
Using Lemmas \ref{lemme_ideal} and \ref{lemma:Magnus}, we have $\eta(\Gamma_3\pi,\Gamma_1\pi)\subset I^2$. Hence, $\overline{\eta}(\Gamma_3\pi,.)=0$, and we deduce that $\overline{\eta}(\psi(a),b)=\overline{\eta}(a,b)$. The same applies for $\overline{\eta}(a,\psi(b))$.
\end{proof}

Thus, the form $\overline{\eta}$ remains unchanged when we apply an element of the Johnson kernel to  one family of curve, but it can be modified when considering an element of the Torelli subgroup. Let $M$ be a closed connected oriented $3$-manifold with a Heegaard splitting $M=H_g\cup_h (-H_g)$, where $h$ denotes the gluing map. For a small disk $D\subset \Sigma=\partial H_g$ and $f\in \I(\Sigma\setminus \mathrm{int}(D))$, we denote $M_f = H_g \cup_{h\circ f} (-H_g)$. Since $f$ acts trivially in homology, $M$ and $M_f$ have the same cohomology groups. But their ring structure may be different. As we shall recall, this difference is controlled by the Johnson homomorphism: 
\[\begin{array}{cccccc}
\tau_1 :& \I(\Sigma\setminus \mathrm{int}(D)) &\to& \mathrm{Hom}(\pi/\Gamma_2\pi, \Gamma_2\pi/\Gamma_3\pi) & \hookrightarrow & \mathrm{Hom}(I/I^2, I^2/I^3) \\
 & f &\mapsto& \Bigl([x]\mapsto\rho_2(f)([x])\cdot [x]^{-1}\Bigr) & \mapsto & \Bigl([x-1]\mapsto [f(x)\cdot x^{-1}-1]\Bigr).
\end{array}
\]
Specifically, $\Gamma_2\pi/\Gamma_3\pi$ is canonically isomorphic to $\Lambda^2 H_1(\Sigma)$ via the map $[e_1,e_2]\mapsto e_1\wedge e_2$, so $\tau_1(f)$ defines a map $\tau_1 : \I(\Sigma\setminus \mathrm{int}(D)) \to \mathrm{Hom}(H_1(\Sigma),\Lambda^2 H_1(\Sigma))\cong(H_1(\Sigma))^* \otimes \Lambda^2 H_1(\Sigma)$. 
Using the intersection pairing $\omega$ of the surface $\Sigma$, we identify $H_1(\Sigma)$ with its dual, via $x\mapsto \omega(.,x)$, and we convert $\tau_1$ into a map from $\I(\Sigma\setminus \mathrm{int}(D))$ to $H_1(\Sigma)\otimes \Lambda^2(H_1(\Sigma))$. There is a natural inclusion of $\Lambda^3H_1(\Sigma)$ in $H_1(\Sigma)\otimes \Lambda^2H_1(\Sigma)$, given by 
\[\begin{array}{ccc}
\Lambda^3 H_1(\Sigma) & \hookrightarrow & H_1(\Sigma)\otimes \Lambda^2 H_1(\Sigma)\\
a\wedge b \wedge c & \mapsto & a\otimes (b\wedge c) + b\otimes (c\wedge a) + c \otimes (a\wedge b).
\end{array} \]
Johnson proved that the image of $\tau_1$ lies in $\Lambda^3 H_1(\Sigma)$ \cite{JohnsonSurvey}, so $\tau_1$ can be viewed as a homomorphism from $\I(\Sigma\setminus \mathrm{int}(D))$ to $\Lambda^3 H_1(\Sigma)$.  

Similarly, the triple cup product form $\mu_M$ can be regarded as an element of $\Lambda^3(H_1(M)/(\mathrm{Tors}(H_1(M))$. This follows from the isomorphisms $\Lambda^3(H_1(M)/(\mathrm{Tors}(H_1(M)) \cong \Lambda^3(H^1(M)^*) \cong (\Lambda^3 H^1(M))^*$, where the last isomorphism comes from the non-degeneracy of the bilinear form 
\[ \begin{array}{ccc}
\Lambda^3 H^1(M) \times \Lambda^3(H^1(M)^*) &\to & \Z \\
(x_1,x_2,x_3),(y_1,y_2,y_3)&\mapsto& det((y_i(x_j))_{1\leq i,j\leq 3}).
\end{array}\]

Recall that, since $f$ is an element of the Torelli subgroup, we have $H_1(M_f)\cong H_1(M)$. Then, $\mu_{M_f}$ can also be regarded as an element of $\Lambda^3 (H_1(M)/\mathrm{Tors}(H_1(M)))$ and it makes sense to consider the difference $\mu_{M_f}-\mu_M$. The following is a generalization of Proposition \ref{prop:eta_johnson}:

\begin{prop}
\label{prop:mu_and_tau_1}
Let $i$ be the inclusion of the Heegaard surface $\Sigma$ in $M$. Then,  
\[ \mu_{M_f} - \mu_M = (\Lambda^3 i_*)(\tau_1(f))  \]
where $\mu_{M_f}$ and $\mu_{M}$ are regarded as elements of $\Lambda^3 (H_1(M)/\mathrm{Tors}(H_1(M)))$ and $\tau_1(f)$ is viewed as an element of $\Lambda^3 H_1(\Sigma)$.
\end{prop}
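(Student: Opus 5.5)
We prove the formula by evaluating both sides on triples $x,x',x''\in H^1(M)$, using Theorem~\ref{thm:eta_varphi} and Corollary~\ref{triple_cup_product} for both $M$ and $M_f$. Since $f\in\I(\Sigma\setminus\mathrm{int}(D))$, $L_\alpha$ is unchanged: the $\alpha$-curves of $M_f$ are $f(\alpha)$, homologous to $\alpha$. Hence $H^1(M_f)\cong L_\alpha\cap L_\beta\cong H^1(M)$ canonically, and it suffices to compare $\mu_{M_f}(x,x',x'')$ with $\mu_M(x,x',x'')$ for the same classes in $L_\alpha\cap L_\beta$.

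\textbf{Reduction to $\overline{\eta}$.} Represent $x$ by $a\in A\cap B[\pi,\pi]$ and $x'$ by $b\in B\cap A[\pi,\pi]$ for $M$. For $M_f$ (diagram $(\Sigma,f(\alpha),\beta)$), the new group is $A_f=f_\sharp(A)$, so we may take $f_\sharp(a)$ as representative of $x$, while $b$ still works for $x'$ after adjusting by commutators. By Theorem~\ref{thm:eta_varphi} together with Corollary~\ref{triple_cup_product}, $\mu_{M_f}(x,x',x'')-\mu_M(x,x',x'')$ is the algebraic intersection with $[c''_\alpha]$ of $[\eta(f_\sharp(a),b)]-[\eta(a,b)]$ in $I/I^2$, modulo $L_\alpha+L_\beta$. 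Here we must check that the ambiguity term $L_{\alpha}+L_\beta$ pairs trivially with $[c''_\alpha]\in L_\alpha\cap L_\beta$, which holds because both are Lagrangian.

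\textbf{The Fox computation.} Write $f_\sharp(a)=a\gamma$ with $\gamma\in\Gamma_2\pi$, whose class in $\Gamma_2\pi/\Gamma_3\pi\cong\Lambda^2H_1(\Sigma)$ is $\tau_1(f)([a])$. By (\ref{eq:Fox1}), $\eta(a\gamma,b)=a\,\eta(\gamma,b)+\eta(a,b)$, and $a\,\eta(\gamma,b)\equiv\eta(\gamma,b)$ modulo $I^2$ since $\eta(\gamma,b)\in I$ by Lemma~\ref{lemme_ideal}. It therefore remains to compute $[\eta(\gamma,b)]\in I/I^2$ for $\gamma=[u,v]$. A Fox expansion gives $\eta([u,v],b)\equiv \omega(v,b)(u-1)-\omega(u,b)(v-1)$ modulo $I^2$, up to a sign convention fixed by the paper's orientation of $\eta$ (which must be checked). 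Thus $[\eta(\gamma,b)]$ is the contraction of $\tau_1(f)([a])\in\Lambda^2H$ with $[b]$ via $\omega$. Pairing with $[c''_\alpha]$ gives $\pm$ the full contraction of $\tau_1(f)$, viewed in $H^*\otimes\Lambda^2H\cong H\otimes\Lambda^2H$, against $[a]\otimes[b]\otimes[c'']$. Since $\tau_1(f)\in\Lambda^3H$, this equals the evaluation of $\tau_1(f)$ on $[a]\wedge[b]\wedge[c'']$ through the $\omega$-duality.

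\textbf{Identification.} Finally, we identify this evaluation with $(\Lambda^3i_*)(\tau_1(f))$ evaluated on $x\wedge x'\wedge x''$. The point is that for $\ell\in L_\alpha\cap L_\beta$ Poincaré dual to $x$, and $h\in H_1(\Sigma)$, we have $\langle x,i_*h\rangle=\omega(h,\ell)$ up to sign: the surface $S$ built from $c_\alpha$ meets $\Sigma$ along $c_\alpha$. This also shows that torsion and $L_\alpha+L_\beta$ are invisible.

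\textbf{Main obstacle.} The hard part is the bookkeeping of signs and normalizations: the convention $x\mapsto\omega(\cdot,x)$, the cyclic embedding $\Lambda^3\hookrightarrow H\otimes\Lambda^2$, the Bredon orientation convention, and the fact that the $\Lambda^3$-element obtained from $H\otimes\Lambda^2$ carries a factor of $1$ rather than $3$ after full antisymmetrization. I would fix these by checking on a single example, such as a BP-map on a genus-3 surface with $M=\#^3 S^1\times S^2$ becoming $T^3$, where $\mu=\pm1$.
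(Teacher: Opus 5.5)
Your proposal is correct and follows essentially the paper's proof: you reduce via Theorem~\ref{thm:eta_varphi} and Corollary~\ref{triple_cup_product} to a difference of $\overline{\eta}$-values, peel off the $\Gamma_2\pi$-correction with a Fox identity, evaluate $\eta$ on a commutator modulo $I^2$ in terms of $\omega$, and recognize a $3\times 3$ determinant. The differences are minor. You twist the $\alpha$-side and use \eqref{eq:Fox1}, where the paper twists $c'_\beta$ and uses Lemma~\ref{lemma:eta_I2}; since twisting $\beta$ by $f$ amounts to twisting $\alpha$ by $f^{-1}$, this choice only interacts with the sign convention for $M_f$. You also make explicit the compatibility $\langle x,i_*h\rangle=\pm\omega(h,\ell)$ that the paper uses silently in its final identification with $(\Lambda^3 i_*)(\tau_1(f))$, and the paper does not pin down the global sign there any more carefully than you do.
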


The relationship between $\mu_{M_f}$, $\mu_M$ and $\tau_1(f)$ was originally established by Johnson \cite{JohnsonSurvey}. Here, we present an alternative proof using the connection between the triple-cup product form $\mu$ and the form $\eta$. Before getting into the proof, we need a short computational lemma. 
\begin{lemma}
\label{lemma:eta_I2}
Let $x,y,z\in\pi$. We have $\eta(x,[y,z])=\omega(x,y)(z-1)-\omega(x,z)(y-1) \,\,\mathrm{mod}\,\, I^2$. 
\end{lemma}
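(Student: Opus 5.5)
We will expand $\eta(x,[y,z])$ with the Fox pairing identities of Lemma~\ref{Fox} and then reduce modulo $I^2$, using Lemma~\ref{lemme_ideal} to discard all terms that lie in $I^2$. We write $[y,z]=yzy^{-1}z^{-1}$; the other convention changes the argument only by symmetric bookkeeping. By \eqref{eq:Fox2}, and since $\varepsilon$ of a group element is $1$, we have
\[
\eta(x,yzy^{-1}z^{-1}) = \eta(x,y)\,zy^{-1}z^{-1} + \eta(x,z)\,y^{-1}z^{-1} + \eta(x,y^{-1})\,z^{-1} + \eta(x,z^{-1}).
\]
From \eqref{eq:Fox2} applied to $0=\eta(x,yy^{-1})-\eta(x,1)$ we also get $\eta(x,y^{-1})=-\eta(x,y)\,y^{-1}$, and likewise for $z$. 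Substituting, the right-hand side becomes
\[
\eta(x,y)\bigl(zy^{-1}z^{-1}-y^{-1}z^{-1}\bigr) + \eta(x,z)\bigl(y^{-1}z^{-1}-z^{-1}\bigr).
\]

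The key reduction is the following. $\eta(x,y)$ lies in $\Z[\pi]$, and its augmentation is $\varepsilon(\eta(x,y))=\omega(x,y)$, the algebraic intersection number. So $\eta(x,y)=\omega(x,y)+i_1$ with $i_1\in I$, and similarly $\eta(x,z)=\omega(x,z)+i_2$. Each bracketed factor lies in $I$, because its augmentation is $0$. Hence $i_1\cdot(\cdots)$ and $i_2\cdot(\cdots)$ lie in $I^2$, and modulo $I^2$ we are left with
\[
\omega(x,y)\bigl(zy^{-1}z^{-1}-y^{-1}z^{-1}\bigr)+\omega(x,z)\bigl(y^{-1}z^{-1}-z^{-1}\bigr).
\]

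It remains to evaluate these two elements of $I/I^2\cong\pi/[\pi,\pi]$. We use the identity $gh-1\equiv(g-1)+(h-1)$ mod $I^2$, which gives $g-h\equiv(g-1)-(h-1)$, and $g^{-1}-1\equiv-(g-1)$. Then
\begin{align*}
zy^{-1}z^{-1}-y^{-1}z^{-1} &\equiv \bigl(-(y-1)\bigr)-\bigl(-(y-1)-(z-1)\bigr) = z-1,\\
y^{-1}z^{-1}-z^{-1} &\equiv -(y-1).
\end{align*}
This yields $\omega(x,y)(z-1)-\omega(x,z)(y-1)$, as claimed.

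The only delicate point is the sign convention: we need to confirm that $\varepsilon(\eta(x,y))=\omega(x,y)$ with the orientation convention for $\varepsilon_p$ fixed in the definition of $\eta$. This holds because every intersection point $p$ contributes $\varepsilon_p(\alpha,\beta)$ times a single group element. Everything else is routine manipulation modulo $I^2$.
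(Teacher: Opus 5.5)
Your proof is correct and follows the paper's route: you expand $\eta(x,[y,z])$ with the Fox identity \eqref{eq:Fox2} and $\eta(x,y^{-1})=-\eta(x,y)y^{-1}$, then reduce modulo $I^2$ using $\eta(x,y)\equiv\omega(x,y)$ mod $I$. Your intermediate expression is the paper's $\bigl(\eta(x,y)(z-1)-\eta(x,z)(y-1)\bigr)y^{-1}z^{-1}$ written unfactored; the paper's display has a typo, $\eta(x,y)(y-1)$ in place of $\eta(x,z)(y-1)$, and your computation gets this term right.
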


\begin{proof}
A straightforward computation using Lemma \ref{Fox} gives
\[ \eta(x,[y,z]) = (\eta(x,y)(z-1)-\eta(x,y)(y-1))y^{-1}z^{-1}\] 

Since $\omega(x,y) = \eta(x,y)$ mod $I$ and $y^{-1}z^{-1} = 1$ mod $I$, it follows that
\begin{align*}
\eta(x,[y,z]) &= (\omega(x,y)(z-1)- \omega(x,z)(y-1))y^{-1}z^{-1} \mbox{ mod } I^2 \\
&= \omega(x,y)(z-1)- \omega(x,z)(y-1) \mbox{ mod } I^2
\end{align*}
\end{proof}

\begin{proof}[Proof of Proposition \ref{prop:mu_and_tau_1}]
Let $x,x',x''\in H^1(M)$ represented by pairs of multicurves $(c_\alpha,c_\beta)$, $(c_\alpha',c_\beta')$ and $(c_\alpha'',c_\beta'')$ in $L_\alpha\cap L_\beta$, respectively. We compute the difference
\begin{align*}
\overline{\eta}(c_\alpha,f(c_\beta')) - \overline{\eta}(c_\alpha,c'_\beta) &= \overline{\eta}(c_\alpha,f(c_\beta')-c_\beta') \\
&= \overline{\eta}(c_\alpha,(f(c_\beta')c_\beta'^{-1}-1)c_\beta') \\
&= \overline{\eta}(c_\alpha,\tau_1(f)(c_\beta')).
\end{align*}
The last equality follows from Lemma \ref{Fox} and the facts that $c_\beta'\in L_\beta$ and $\tau_1(c_\beta')\in I^2\subset I$. 

For simplicity, assume that $\tau_1(f)=u \wedge v \wedge w\in \Lambda^3H_1(\Sigma)$ (in general, $\tau_1(f)$ is a sum of such terms). Then, the image of $\tau_1(f)(c_\beta')$ in $\Lambda^2 H_1(\Sigma)$ is given by:
\[\tau_1(f)(c_\beta') = \omega(c_\beta',u)(v\wedge w) + \omega(c_\beta',v)(w\wedge u) + \omega(c_\beta',w)(u\wedge v) \]

Via the isomorphism $\Lambda^2 H_1(\Sigma)\cong[\pi,\pi]/\Gamma_3\pi$, this corresponds to 
\[\tau_1(f)(c_\beta') = \omega(c_\beta',u)[v, w] + \omega(c_\beta',v)[w, u] + \omega(c_\beta',w)[u, v]. \]

Applying Lemma \ref{lemma:eta_I2} with the isomorphism $I/I^2\to H_1(\Sigma)$, $x-1\mapsto x$, we obtain: 
\begin{align}
\label{eq:7}
\begin{split}
\overline{\eta}(c_\alpha,\tau_1(c_\beta'))=&
\hspace{3px}\omega(c_\beta',u)\Bigl(\omega(c_\alpha,v)w-\omega(c_\alpha,w)v\Bigr) \\
&+ \omega(c_\beta',v)\Bigl(\omega(c_\alpha,w)u-\omega(c_\alpha,u)w\Bigr) \\
&+ \omega(c_\beta',w)\Bigl(\omega(c_\alpha,u)v-\omega(c_\alpha,v)u\Bigr) 
\end{split}
 \end{align}

Then, by Corollary $\ref{triple_cup_product}$ and Theorem $\ref{thm:eta_varphi}$, the difference $\mu_{M_f}-\mu_M$ is given by:
\begin{align*}
\mu_{M_f}(x,x',x'') - \mu_M(x,x',x'') &= \omega(c_\alpha'',\overline{\eta}(c_\alpha,f(c_\beta')) - \overline{\eta}(c_\alpha,c'_\beta)) \\
&= \omega(c_\alpha'',\overline{\eta}(c_\alpha,\tau_1(f)(c_\beta'))) 
\end{align*}

By (\ref{eq:7}), this evaluates as the following determinant:
\[\mu_{M_f}(x,x',x'') - \mu_M(x,x',x'')
=\begin{vmatrix} \omega(c_\alpha,u) & \omega(c_\beta',u) & \omega(c_\alpha'',u) \\
\omega(c_\alpha,v) & \omega(c_\beta',v) & \omega(c_\alpha'',v) \\
\omega(c_\alpha,w) & \omega(c_\beta',w) & \omega(c_\alpha'',w)  \end{vmatrix}\]

Thus, $\mu_{M_f}-\mu_M$ corresponds to $(\Lambda^3i_*)(u \wedge v \wedge w) = (\Lambda^3i_*)(\tau_1(f))$ in $\Lambda^3(H_1(M)/\mathrm{Tors}(H_1(M)))$. 
\end{proof}

\section{Examples}
\label{section:examples}

\subsection{The manifold $\Sr^1\times\Sr^1\times\Sr^1$}

The $3$-manifold $\Sr^1\times\Sr^1\times\Sr^1$ can be viewed as a cube with opposite faces identified. We can obtain a Heegaard splitting as follows. The first handlebody consists of a $0$-handle located the center of the cube, to which three $1$-handles are attached, each passing through a pair of opposite faces of the cube. We can check that the complement is also a handlebody of genus $3$. The corresponding Heegaard diagram is shown in Figure \ref{fig:heegaard_diagram_S1S1S1}. The Heegaard surface (in grey) is of genus $3$. The first two handles are represented by squares $1$ and $2$, identified by horizontal and vertical symmetry. The third handle is represented by inner and outer orange squares, which are identified via the identity map.

\begin{figure}[h]
\includegraphics[width=0.3\textwidth]{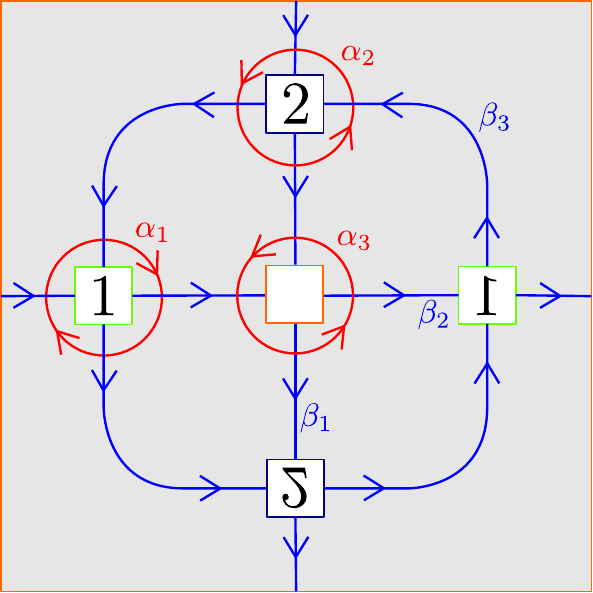}
\captionof{figure}{Heegaard diagram of $\Sr^1\times\Sr^1\times\Sr^1$.}
\label{fig:heegaard_diagram_S1S1S1}
\end{figure}

For $i\in\{1,2,3\}$ we have $[\alpha_i]=[\beta_i]$, so that $L_\alpha=L_\beta$ and \[L_\alpha\cap L_\beta=\langle \alpha_1,\alpha_2,\alpha_3\rangle = \langle \beta_1,\beta_2,\beta_3\rangle\] Let $x,x',x''$ be a basis of $H^1(\Sr^1\times\Sr^1\times\Sr^1)$, respectively associated by duality with the pairs of multicurves $(\alpha_1,\beta_1), (\alpha_2,\beta_2)$, $(\alpha_3,\beta_3)$ (see Figure \ref{fig:heegaard_diagram_S1S1S1_ca_cb}).

\begin{figure}[h]
\includegraphics[width=0.28\textwidth]{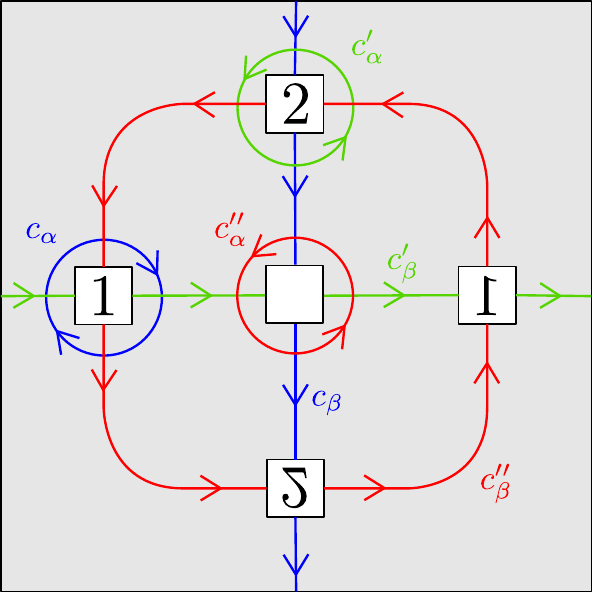}
\captionof{figure}{Pairs of multicurves associated with $x$,$x'$ and $x''$.}
\label{fig:heegaard_diagram_S1S1S1_ca_cb}
\end{figure}

Let us compute $\mu(x,x',x'')$ with this diagram. Applying the formula of Theorem \ref{1-cycle} we find that $\varphi(c_\alpha,c_\beta')$ is the $1$-cycle shown in Figure \ref{fig:varphi_in_S1S1S1}. This curve intersects $c_\alpha''$ transversely in a single point with positive sign. Therefore, by Corollary \ref{triple_cup_product}, we obtain $\mu(x,x',x'')=1$. 

\begin{figure}[h]
\includegraphics[width=0.28\textwidth]{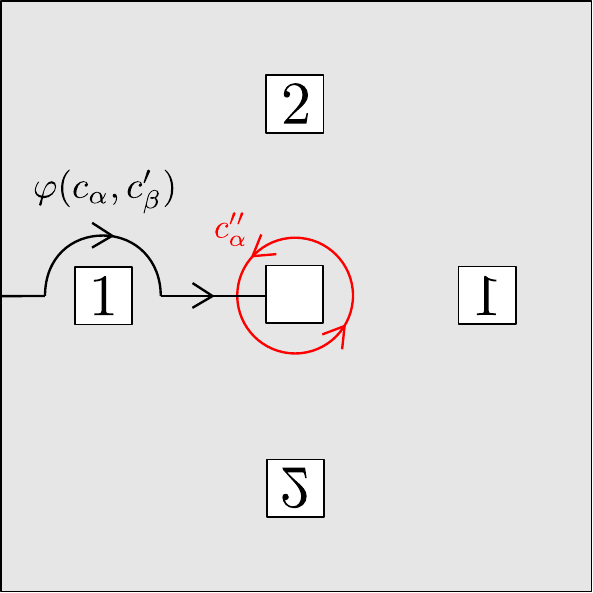}
\captionof{figure}{$\varphi(c_\alpha,c_\beta')$ and $c_\alpha''$ curves.}
\label{fig:varphi_in_S1S1S1}
\end{figure}

\subsection{The manifold $\Sigma_g\times\Sr^1$}

Using the classical representation of $\Sigma_g$ as a $4g$-gon with identified edges, we can view $\Sigma_g\times\Sr^1$ as a thickened $4g$-gon with top and bottom faces identified via the identity map and vertical faces identified in the same way as in the construction of $\Sigma_g$. We obtain a Heegaard splitting of genus $2g+1$ by cutting out a $0$-handle at the center of the polygon, together with $2g$ horizontal $1$-handles passing through pairs of identified faces, and one vertical $1$-handle passing through the top and bottom faces. Figure \ref{fig:heegaard_sigma2_S1} and Figure \ref{fig:heegaard_construction_sigma2_S1} show the construction of the Heegaard splitting and the corresponding Heegaard diagram when $g=2$. The first four handles are depicted by circles identified under symmetry operations. The last handle is formed by the inner and outer octagons, identified via the identity map. Each $\alpha$-curve is a meridian of the Heegaard surface. 

\begin{center}
\includegraphics[width=0.85\textwidth]{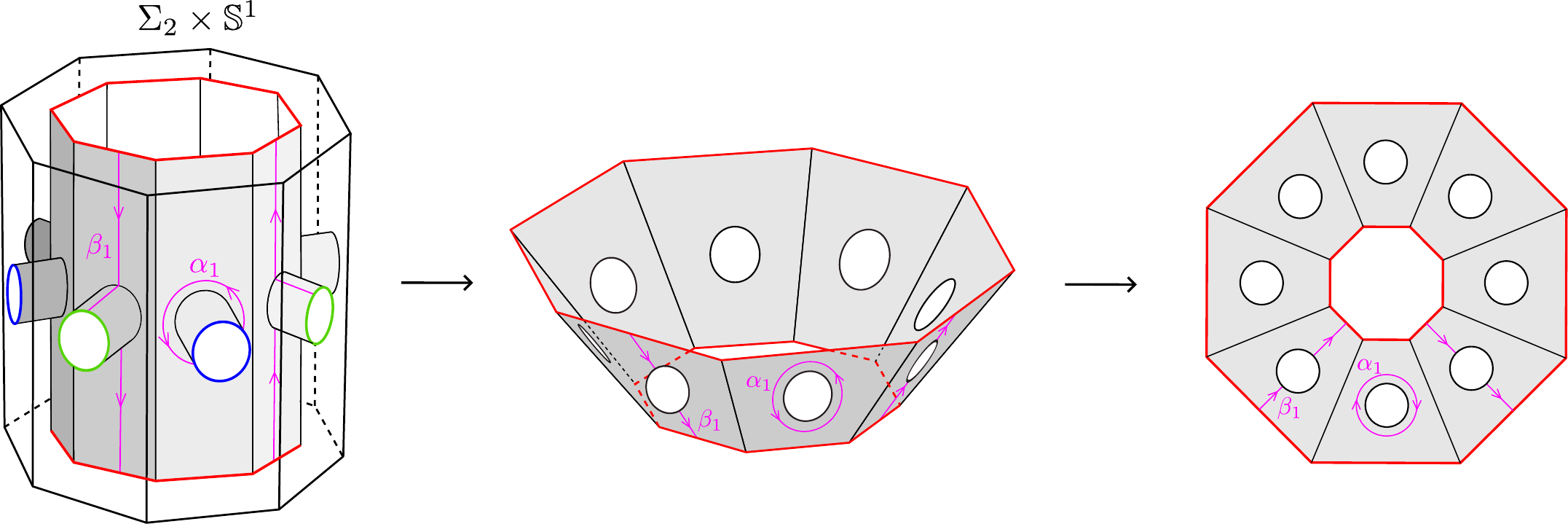}
\captionof{figure}{Heegaard surface inside $\Sigma_2\times \Sr^1$.}
\label{fig:heegaard_construction_sigma2_S1}
\end{center}

\begin{figure}[h]
\includegraphics[width=0.3\textwidth]{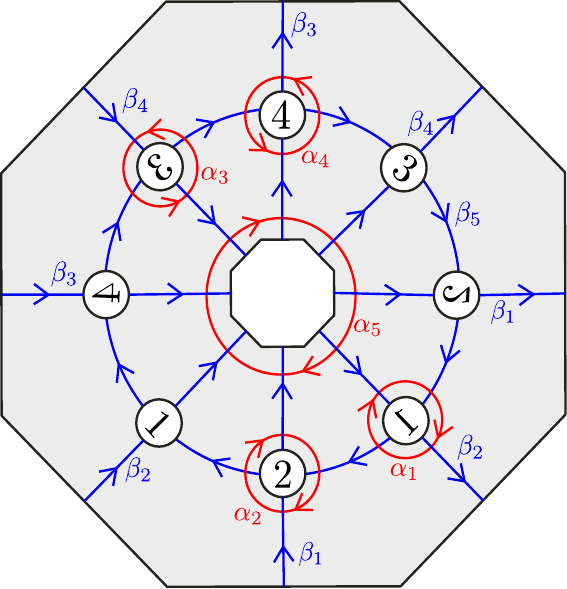}
\captionof{figure}{Heegaard diagram of $\Sigma_2\times\Sr^1$.}
\label{fig:heegaard_sigma2_S1}
\end{figure}

For $i\in\{1,...,2g+1\}$ we have $[\alpha_i]=[\beta_i]$, so that $L_\alpha=L_\beta$ and \[L_\alpha\cap L_\beta=\langle \alpha_1,...,\alpha_{2g+1}\rangle = \langle \beta_1,...,\beta_{2g+1}\rangle\] 

Let $x_i\in H^1(\Sigma_2\times\Sr^1)$ be the element associated by duality with the pair of curves $(\alpha_i,\beta_i)$. The family $(x_1,...,x_{2g+1})$ forms a basis of $H^1(\Sigma_2\times\Sr^1)$. 

We observe that for $i\neq j$, if $\alpha_i$ does not intersect $\beta_j$, then $\varphi(\alpha_i,\beta_j)=0$ and consequently $\mu(x_i,x_j,.)=0$. 

To illustrate the case where the intersection between $\alpha_i$ and $\beta_j$ is nonempty, we take $g=2$ and we set $i=1$ and $j=2$. The $1$-cycle $\varphi(\alpha_1,\beta_2)$ is shown in Figure \ref{fig:varphi_sigma2_S1}. By Corollary \ref{triple_cup_product} we obtain 
\begin{gather*}\mu(x_1,x_2,x_5)=1\\ \mu(x_1,x_2,x_3)=\mu(x_1,x_2,x_4)=0\end{gather*}

\begin{figure}[h]
\includegraphics[width=0.3\textwidth]{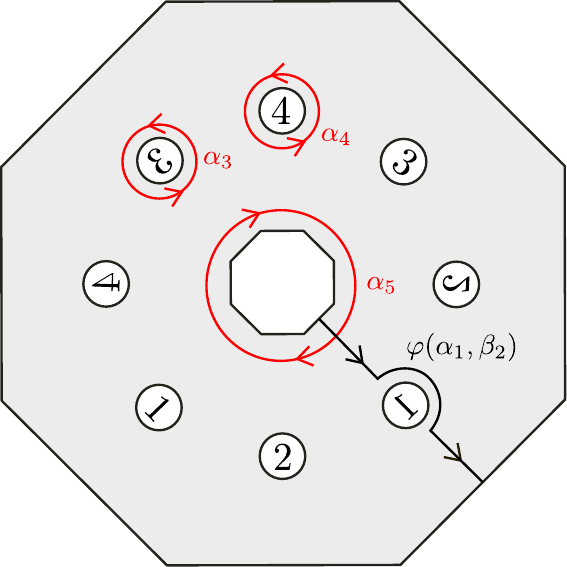}
\captionof{figure}{$\varphi(\alpha_1,\beta_2)$}
\label{fig:varphi_sigma2_S1}
\end{figure}

In general, for any $g$, one can verify by standard cohomological techniques that the triple-cup product form of $\Sigma_g\times \Sr^1$ is induced by the homological intersection form $\omega$ of $\Sigma_g$. Let $p^*:H^1(\Sigma_g\times \Sr^1)\to H^1(\Sigma_g)$ and $q^*:H^1(\Sigma_g\times \Sr^1)\to H^1(\Sr^1)$ be the maps induced by the inclusions $\Sigma_g\hookrightarrow \Sigma_g\times \Sr^1$ and $\Sr^1\hookrightarrow \Sigma_g\times \Sr^1$. By the Künneth formula, we have an isomorphism
\[\begin{array}{rcc}
H^1(\Sigma_g\times \Sr^1) &\xrightarrow[]{\cong}& (H^1(\Sigma_g)\otimes H^0(\Sr^1))\oplus (H^0(\Sigma)\otimes H^1(\Sr^1)) \\
x\hspace{17px} &\mapsto & (x_{\Sigma}\times 1) + (1\times x_{S})
\end{array}\]
where $x_\Sigma = p^*(x)$ and $x_S = q^*(x)$. 

Then, for $x,y,z\in H^1(\Sigma_g\times \Sr^1)$, we have
\begin{align*}
\mu(x,y,z) =\,&\langle x \smile y \smile z, [\Sigma_g \times \Sr^1]\rangle \\
= \,&\langle ((x_\Sigma \times 1)+(1\times x_S))\smile((y_\Sigma\times 1)+(1\times y_S))\smile((z_\Sigma\times 1)+(1\times z_S)) , [\Sigma_g\times \Sr^1]       \rangle \\
= \,& \langle (x_\Sigma\smile y_\Sigma \smile 1)\times(1\smile 1 \smile z_S),[\Sigma_g] \times [\Sr^1] \rangle \\
&+ \langle (x_\Sigma \smile 1 \smile z_\Sigma)\times(1\smile y_S \smile 1), [\Sigma_g]\times [\Sr^1] \rangle \\
&+ \langle (1\smile y_\Sigma \smile z_\Sigma)\times (x_S\smile 1 \smile 1), [\Sigma_g]\times[\Sr^1]\rangle 
\end{align*}

The last equality follows from the formula $(a\times b)\smile(c\times d)=(a\smile c)\times(b\smile d)$, for all $a,c\in H^*(\Sigma_g)$ and $b,d\in H^*(\Sr^1)$ (note that $x_\Sigma\smile y_\Sigma \smile z_\Sigma =0$ and $x_S \smile y_S = x_S\smile z_S = y_S \smile z_S =0$), and the identity $[\Sigma_g\times \Sr^1]=[\Sigma_g]\times[\Sr^1]$. Then, we obtain: 
\begin{align*}
\mu(x,y,z)=\,& \langle p^*(x\smile y),[\Sigma_g]\rangle . \langle z_S,[\Sr^1]\rangle - \langle p^*(x\smile z),[\Sigma_g]\rangle . \langle y_S,[\Sr^1]\rangle + \langle p^*(y\smile z),[\Sigma_g]\rangle . \langle x_S,[\Sr^1]\rangle\\
=&\,\tilde{\omega}(x_\Sigma, y_\Sigma).\langle z_S,[\Sr^1]\rangle - \tilde{\omega}(x_\Sigma,z_\Sigma).\langle y_S,[\Sr^1]\rangle + \tilde{\omega}(y_\Sigma,z_\Sigma).\langle x_S,[\Sr^1]\rangle
\end{align*} 
where $\tilde{\omega} : H^1(\Sigma_g)\times H^1(\Sigma_g)\to\Z$ is given by $\tilde{\omega}(.,.) = \omega(P(.),P(.))$, where $P:H^1(\Sigma_g)\to H_1(\Sigma_g)$ denotes the Poincaré isomorphism and $\omega : H_1(\Sigma_g)\times H_1(\Sigma_g)\to\Z$ is the intersection form on $\Sigma_g$. 

We can recover this formula using the Heegaard diagram described above. For $i\in\{1,...,2g\}$, the pair $(\alpha_i,\beta_i)$ represents the homology class of the surface $D_{\alpha_i}\cup S_i \cup D_{\beta_i}$ in $H_2(\Sigma_g\times \Sr^1)$, where $D_{\alpha_i},D_{\beta_i}$ are disks in $H_\alpha$, $H_\beta$, respectively, and $S_i\subset\Sigma$ is a surface such that $\partial S = \alpha_i \cup \beta_i = \partial D_{\alpha_i}\cup \partial D_{\beta_i}$. The dual $1$-cocycle, which corresponds to $x_i$ in $H^1(\Sigma_g\times\Sr^1)$, is represented by the "horizontal" curve which intersects transversely the surface once. Similarly, the pair $(\alpha_{2g+1}, \beta_{2g+1})$ corresponds to the "horizontal" surface in $\Sigma_{2g}\times\Sr^1$ and the dual $1$-cocycle is represented by the "vertical" curve corresponding to the $\Sr^1$ factor (see Figure \ref{fig:surfaces_octogone}). Consequently, we have $q^*(x_i)=0$ for $i\in\{1,...,2g\}$ and $p^*(x_{2g+1})=0$. 

\begin{figure}[h]
\includegraphics[width=0.4\textwidth]{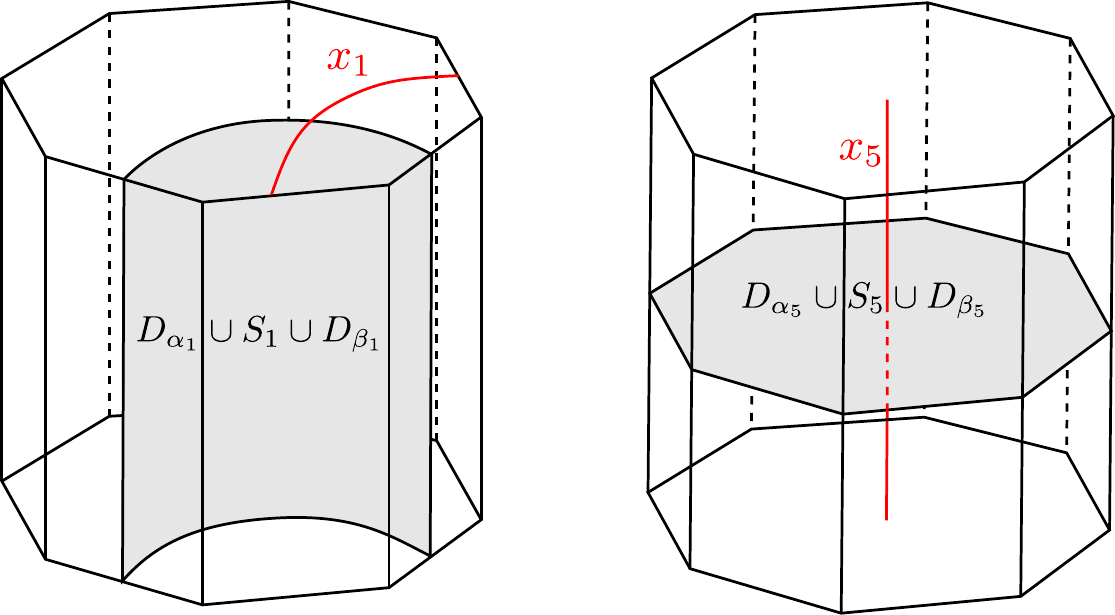}
\captionof{figure}{Poincaré duals of $D_{\alpha_1}\cup S_1\cup D_{\beta_1}$ and $D_{\alpha_5}\cup S_5\cup D_{\beta_5}$ in $H^1(\Sigma_2\times \Sr^1)$.}
\label{fig:surfaces_octogone}
\end{figure}

It follows that the isomorphism $H^1(\Sigma_{2g}\times\Sr^1)\cong H^1(\Sigma_{2g}) \times H^1(\Sr^1)$ sends $x_1,...,x_{2g}$ to a symplectic basis of $H^1(\Sigma_{2g})$ and $x_{2g+1}$ to a generator of $H^1(\Sr^1)\cong \Z$. Therefore, for $i,j,k\in\{1,...,2g \}$ the triple-cup product form of $\Sigma_{2g}\times\Sr^1$ is given by 
\begin{gather*}
\mu(x_i,x_j,x_{2g+1})=\omega(P(p^*(x_i),P(p^*(y_j)))\\
\mu(x_i,x_j,x_k)=0.
\end{gather*}

\bibliography{triple_cup_product_heegaard_diagrams_biblio}
\bibliographystyle{alpha}

\end{document}